\theoremstyle{plain}
\newtheorem{theorem}{Theorem}[section]
\newtheorem{corollary}[theorem]{Corollary}
\newtheorem{lemma}[theorem]{Lemma}
\theoremstyle{definition}
\newtheorem{definition}{Definition}[section]
\theoremstyle{example}
\theoremstyle{remark}
\newlength{\defbaselineskip}
\def\ps@pprintTitle{
  \let\@oddhead\@empty
  \let\@evenhead\@empty
  \let\@oddfoot\@empty
  \let\@evenfoot\@oddfoot
}
\journal{Computer aided geometric design}
\begin{document}
\begin{frontmatter}
\title{On some  inequalities for submanifolds of Bochner Kaehler manifolds}
\author[n1]{Mehraj Ahmad Lone\corref{cor1}}
\ead{mehraj.jmi@gmail.com}
\author[n2]{Mohammed Jamali}
\author[n3]{Mohammad Hasan Shahid}
\address[n1]{Department of Mathematics, Central University of Jammu, Jammu-180011, India.}
\address[n2]{Department of Mathematics, Al-Falah University, Haryana-121004, India.}
\address[n3]{Department of Mathematics, Jamia Millia Islamia, New Delhi-110 025, India.}
\cortext[cor1]{Corresponding author}
\begin{abstract}
B. Y. Chen established sharp inequalities between certain Riemannian invariants and the squared mean curvature for submanifolds in real space form as well as in complex space form. In this paper we generalize Chen inequalities for submanifolds of Bochner Kaehler manifolds. Moreover, we consider CR-warped product submanifolds  of Bochner Kaehler manifold and establish an inequality for scalar curvature.
\end{abstract}
\begin{keyword}
\texttt Bochner Kaehler manifold, CR-warped product, slant submanifolds, Einstein manifold, Chen inequality.

2010 Mathematics Subject Classification:  53C15, 53C25, 53C40.
\end{keyword}
\end{frontmatter}

\section{Introduction}
In \cite{biha4}, B. Y. Chen established   sharp inequality for a submanifold in a real space form involving intrinsic invariants of the submanifolds and squared mean curvature, the main extrinsic invariant and in \cite{biha2}, B. Y. Chen obtained the same inequality  for complex space form.  After that many research articles \cite{biha5, biha6, biha1} have been published by different authors for different submanifolds and ambient spaces in complex as well as in contact version. In this article we obtain these inequalities for submanifolds in Bochner Kaehler manifold.

 In \cite{bishopwrap} Bishop and O'Neil initiated the thoery of warped product submanifold as a generalization of pseudo-Riemannian product manifold. In \cite{chen warp} Chen introduced the notion of CR-warped products. In This paper we study the CR-warped product submanifolds of Bochner Kaehler manifolds.
\newline
\section{Preliminaries}
Let $\mathcal{W}$ be a $n$-dimensional submanifold of a Bochner Kaehler manifold $\overline{\mathcal{W}}$ of dimension $2m$. Let $\nabla$ and $\overline{\nabla}$ be the  Levi-Civita connection on $\mathcal{W}$ and $\overline{\mathcal{W}}$ respectively. Let $J$ be the complex structure on $\overline{\mathcal{W}}$. Then the Gauss and Weingarten formulas are given respectively by
\begin{eqnarray}\label{a1}
\overline{\nabla}_{X}Y = \nabla_{X}Y + \omega(X,Y),
\end{eqnarray}
\begin{eqnarray}\label{a2}
\overline{\nabla}_{X}V =  - B_{V}X + \nabla_{X}^{\perp}Y ,
\end{eqnarray}
for all $X, Y$ tangent to  $\mathcal{W}$ and vector field $V$ normal to $\mathcal{W}$. Where $\omega$, $\nabla_{X}^{\perp}$, $B_{V}$ denotes the second fundamental form, normal connection and the shape operator respectively. The second fundamental form and the shape operator are related by
\begin{eqnarray}\label{a3}
g(\omega(X,Y), V) = g(B_{V}X, Y).
\end{eqnarray}
Let $R$ be the curvature tensor of $\mathcal{W}$, Then the Gauss equation is given by \cite{biha4}
\begin{eqnarray*}\label{a4}
\overline{R}(X,Y,Z,W) = R(X,Y,Z,W) + g(\omega(X,W),\omega(Y,Z)) - g(\omega(X,Z),\omega(Y,W))
\end{eqnarray*}
for any vector fields $X$, $Y$, $Z$, $W$ tangent to $\mathcal{W}$.

The curvature tensor of a Bochner Kaehler manifold $\overline{\mathcal{W}}$ is given by \cite{biha7}
\begin{eqnarray}\label{a5}
\overline{R}(X,Y,Z,W)\nonumber &=& L(Y,Z)g(X,W) - L(X,Z)g(Y,W) + L(X,W)g(Y,Z) \\ \nonumber && - L(Y,W)g(X,Z) + M(X,W)g(JX,W) - M(X,Z)g(JY,W)\\ \nonumber && + M(X,W)g(JY,Z) - M(Y,W)g(JX,Z) \\  && - 2M(X,Y)g(JZ,W) - 2M(Z,W)g(JX,Y)
\end{eqnarray}

where
\begin{eqnarray}\label{a6}
L(Y,Z) = \frac{1}{2n+4}Ric(Y,Z) - \frac{\rho}{2(2n+2)(2n+4)}g(Y,Z),
\end{eqnarray}
\begin{eqnarray}\label{a7}
M(Y,Z)  = -L(Y,JZ),
\end{eqnarray}
\begin{eqnarray}\label{a8}
L(Y,Z)  = L(Z,Y),\hspace{1cm} L(Y,Z) = L(JY, JZ), \hspace{1cm} L(Y, JZ) = -L(JY,Z),
\end{eqnarray}
$Ric$ and $\rho$ are the Ricci tensor and scalar curvature of  $\mathcal{W}$.

Let $x\in \mathcal{W}$ and $\{e_{1}, ... , e_{n}\}$ be an orthonormal basis of the tangent space $T_{x}\mathcal{W}$ and  $\{e_{n+1}, ... , e_{2m}\}$ be the orthonormal basis  of $T^{\perp}\mathcal{W}$. We denote by $\mathcal{H}$, the mean curvature vector at $x$, that is
\begin{eqnarray}\label{a9}
\mathcal{H}(x) = \frac{1}{n}\sum_{i=1}^{n}\omega(e_{i},e_{i}),
\end{eqnarray}
Also, we set
\begin{eqnarray*}\label{a10}
\omega_{ij}^{r} = g(\omega(e_{i},e_{j}),e_{r}), \hspace{1cm} i,j \in \{ 1, ... , n\},\hspace{.3cm} r \in \{n+1, ... ,2m\}
\end{eqnarray*}
and
\begin{eqnarray}\label{a11}
\|\omega\|^{2} = \sum_{i,j=1}^{n}(\omega(e_{i},e_{j}), \omega(e_{i},e_{j})).
\end{eqnarray}
For any $x \in \mathcal{W}$ and $X \in T_{x}\mathcal{W}$, we put $JX = TX + FX$, where $TX$ and $FX$ are the tangential and normal components of $JX$, respectively.

 We denote by
\begin{eqnarray*}\label{a12}
\|T\|^{2} = \sum_{i,j=1}^{n}g^{2}(Te_{i}, e_{j}).
\end{eqnarray*}
Let $\mathcal{W}$ be a Riemannian manifold. Denote by $\mathcal{K}(\pi)$ the sectional curvature of $\mathcal{W}$ of the plane section $\pi \subset T_{x}\mathcal{W}, x\in \mathcal{W}$.  The scalar curvature $\rho$ for an orthonormal basis$ \{e_{1}, e_{2}, ..., e_{n}\}$ of the tangent space $T_{x}\mathcal{W}$  at $x$ is defined by
\begin{eqnarray*}\label{a121}
\rho(x) = \sum_{i<j} K(e_{i}\wedge e_{j}).
\end{eqnarray*}

\begin{lemma}\cite{biha4}
Let $n \geq 2$ and $x_{1}, x_{2}, ... ,x_{n}$, b be real numbers such that
\begin{eqnarray*}\label{l3}
( \sum_{i=1}^{n} x_{i} )^{2} = (n-1)( \sum_{i=1}^{n} x_{i}^{2} + b)
\end{eqnarray*}
then $2x_{1}x_{2} \geq b$, with equality holds if and only if
$$ x_{1} + x_{2} = x_{3} = ... = x_{n}.$$
\end{lemma}
In \cite{biha11} A. Bejancu introduced the notion of CR-submanifolds, which is the generalization of invariant and anti-invariant submanifolds. In  \cite{biha3} B. Y. Chen introduced  the  notion of slant submanifolds as a generalization of CR-submanifolds.
\begin{definition}
A submanifold $\mathcal{W}$ of a Bochner Kaehler manifold $\overline{\mathcal{W}}$ is said to be a slant submanifold if for any $x \in \mathcal{W}$ and $X \in T_{x}\mathcal{W}$, the angle between $JX$ and $T_{x}\mathcal{W}$ is constant, i.e., the angle does not depend on the choice of $x \in \mathcal{W}$ and $X \in T_{x}\mathcal{W}$. The angle $\theta \in [0, \frac{\pi}{2}]$ is called the slant angle of $\mathcal{W}$ in $\overline{\mathcal{W}}$.

Invariant and anti-invariant submanifolds are the slant submanifolds with slant angle $\theta = 0$ and $\theta =\frac{\pi}{2}$ respectively and when $0< \theta < \frac{\pi}{2}$, then slant submanifold is called proper slant submanifold.
\end{definition}
\begin{definition}
Let $(N_{1}, g_{1})$ and $(N_{2}, g_{2})$ be two Riemannian manifolds and f, a positive
differentiable function on $N_{1}$. The warped product of $N_{1}$ and $N_{2}$ is the Riemannian
manifold $M = N_{1} \times N_{2} = (N_{1} \times N_{2}, g)$, where $g = g_{1} + f^{2}g_{2}$
\end{definition}
\begin{definition}
A Riemannian manifold $\mathcal{W}$ is said to be Einstein manifold if the Ricci tensor is proportional to the metric tensor, that is, $Ric(X,Y) = \lambda g(X,Y)$ for some  constant $\lambda$.
\end{definition}

\section{B. Y. Chen inequalities}
In this section, we obtain B. Y. Chen inequalities for submanifolds of a Bochner Kaehler manifolds.

First we have,
\begin{theorem}
Let $\mathcal{W}$ be a submanifold of  a Bochner Kaehler manifold $\overline{\mathcal{W}}$. Then, for each point $x \in \mathcal{W}$ and each plane section $\pi \subset T_{x}\mathcal{W}$, we have
\begin{eqnarray}\label{t1}
\nonumber \mathcal{K}(\pi) \geq \bigg(\frac{5n^{2}+31n+26+3\|T\|^{2}}{2(2n+2)(2n+4)}\bigg)\rho - \frac{n^{2}(n-2)}{2(n-1)}\|\mathcal{H}\|^{2} - \frac{6}{2(2n+4)}Ric(e_{i},Je_{j})g(e_{i},Je_{j}).\\
\end{eqnarray}
Equality  holds if and only if there exists an orthonormal basis $\{e_{1} ,e_{2}, ... , e_{n}\}$ of $T_{x}\mathcal{W}$ and orthonormal basis $\{e_{n+1}, e_{n+2} , ... , e_{2m}\}$ of $T^{\perp}\mathcal{W}$ such that the shape operators takes the following forms

\begin{eqnarray}\label{t2}
B_{n+1} =
 \begin{pmatrix}
  \alpha & 0 & 0 & \cdots & 0 \\
  0 & \beta & 0 & \cdots & 0 \\
  0 & 0 & \xi &\cdots & 0 \\
  \vdots & \vdots & \vdots  & \ddots & \vdots  \\
  0 & 0 & 0 & \cdots & \xi
 \end{pmatrix}  , \alpha+\beta = \xi
\end{eqnarray}
and
\begin{eqnarray}\label{t3}
B_{r} =
 \begin{pmatrix}
  \omega_{11}^{r} &  \omega_{12}^{r} & 0 & \cdots & 0 \\
   \omega_{12}^{r} & - \omega_{11}^{r} & 0 & \cdots & 0 \\
  0 & 0 & 0 &\cdots & 0 \\
  \vdots & \vdots & \vdots  & \ddots & \vdots  \\
  0 & 0 & 0 & \cdots & 0
 \end{pmatrix}  , r = n+2, ..., 2m.
\end{eqnarray}
\end{theorem}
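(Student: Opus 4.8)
The plan is to extract $\mathcal{K}(\pi)$ from the Gauss equation by feeding in the explicit Bochner Kaehler curvature \eqref{a5} and then controlling the second fundamental form through the algebraic lemma. First I would fix an orthonormal basis $\{e_{1},\dots,e_{n}\}$ of $T_{x}\mathcal{W}$ adapted to the plane section, so that $\pi=\mathrm{span}\{e_{1},e_{2}\}$, together with an orthonormal basis $\{e_{n+1},\dots,e_{2m}\}$ of $T_{x}^{\perp}\mathcal{W}$ chosen so that $e_{n+1}$ is parallel to the mean curvature vector $\mathcal{H}$. With this normalization \eqref{a9} gives $\sum_{i}\omega_{ii}^{n+1}=n\|\mathcal{H}\|$ and $\sum_{i}\omega_{ii}^{r}=0$ for $r\ge n+2$, which is exactly what the lemma will need.

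Next I would put $X=W=e_{i}$, $Y=Z=e_{j}$ in the Gauss equation and sum over all $i\neq j$. The terms quadratic in $\omega$ reorganize, via \eqref{a9} and \eqref{a11}, so that
\begin{eqnarray*}
2\rho = n^{2}\|\mathcal{H}\|^{2} - \|\omega\|^{2} + \sum_{i\neq j}\overline{R}(e_{i},e_{j},e_{j},e_{i}).
\end{eqnarray*}
The ambient sum is then computed from \eqref{a5}. Splitting $L$ through \eqref{a6} into its Ricci part and its scalar part, and using $M=-L(\cdot,J\cdot)$ from \eqref{a7} together with the symmetries \eqref{a8}, the $L$-terms collapse with the help of $\sum_{i}Ric(e_{i},e_{i})=2\rho$ and $\sum_{i}g(e_{i},e_{i})=n$, while the $M$-terms generate the two characteristic contributions: the scalar part of $L$ inside $M$ produces $\sum_{i,j}g(Te_{i},e_{j})^{2}=\|T\|^{2}$, which is why $\|T\|^{2}$ ends up inside the coefficient of $\rho$, and the Ricci part produces the term $Ric(e_{i},Je_{j})g(e_{i},Je_{j})$. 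Collecting constants yields a closed expression for $\sum_{i\neq j}\overline{R}(e_{i},e_{j},e_{j},e_{i})$, and a parallel computation gives the single-plane term $\overline{R}(e_{1},e_{2},e_{2},e_{1})$ in the same invariants.

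With the scalar-curvature identity in hand I would set
\begin{eqnarray*}
\varepsilon = 2\rho - \frac{n^{2}(n-2)}{n-1}\|\mathcal{H}\|^{2} - \sum_{i\neq j}\overline{R}(e_{i},e_{j},e_{j},e_{i}),
\end{eqnarray*}
so that $\varepsilon=\frac{n^{2}}{n-1}\|\mathcal{H}\|^{2}-\|\omega\|^{2}$. Applying the algebraic lemma to $x_{i}=\omega_{ii}^{n+1}$, with $b$ defined by $(\sum_{i}x_{i})^{2}=(n-1)(\sum_{i}x_{i}^{2}+b)$, gives $2\omega_{11}^{n+1}\omega_{22}^{n+1}\ge b$. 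Comparing with the Gauss expression for the single plane,
\begin{eqnarray*}
\mathcal{K}(\pi) = \overline{R}(e_{1},e_{2},e_{2},e_{1}) + \sum_{r}\big[\omega_{11}^{r}\omega_{22}^{r}-(\omega_{12}^{r})^{2}\big],
\end{eqnarray*}
and observing that the leftover contributions assemble into a sum of squares $(\omega_{11}^{r}+\omega_{22}^{r})^{2}$, $\sum_{i\ge 3}(\omega_{ii}^{r})^{2}$ and off-diagonal squares $(\omega_{ij}^{r})^{2}$ with $(i,j)\neq(1,2)$, all non-negative, I obtain $\sum_{r}[\omega_{11}^{r}\omega_{22}^{r}-(\omega_{12}^{r})^{2}]\ge\frac12\varepsilon$, hence $\mathcal{K}(\pi)\ge\frac12\varepsilon+\overline{R}(e_{1},e_{2},e_{2},e_{1})$. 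Substituting the Bochner expressions for both $\sum_{i\neq j}\overline{R}(e_{i},e_{j},e_{j},e_{i})$ and $\overline{R}(e_{1},e_{2},e_{2},e_{1})$ and collecting terms rearranges this into \eqref{t1}.

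For the equality case I would trace back the two inequalities. Equality in the algebraic lemma forces $\omega_{11}^{n+1}+\omega_{22}^{n+1}=\omega_{33}^{n+1}=\dots=\omega_{nn}^{n+1}$ and the vanishing of the off-diagonal $n{+}1$ entries outside $\pi$; since a rotation inside $\pi$ keeps both $\pi$ and $e_{n+1}\parallel\mathcal{H}$ fixed, one may further diagonalize the $(1,2)$-block, yielding \eqref{t2} with $\xi=\alpha+\beta$. Equality in the sum-of-squares step forces, for each $r\ge n+2$, $\omega_{11}^{r}=-\omega_{22}^{r}$, $\omega_{ii}^{r}=0$ for $i\ge 3$ and $\omega_{ij}^{r}=0$ for $(i,j)\neq(1,2)$, which is precisely \eqref{t3}. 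I expect the genuine obstacle to be the computation in the second paragraph: unlike the real- or complex-space-form case the Bochner tensor is built from the non-parallel $L$ and $M$, so the book-keeping of the ten terms of \eqref{a5}, in particular separating the Ricci and scalar parts and verifying that the $J$-twisted sums condense exactly into $\|T\|^{2}$ and $Ric(e_{i},Je_{j})g(e_{i},Je_{j})$ with the stated constants, is where all the care is required.
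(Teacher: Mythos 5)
Your proposal takes essentially the same route as the paper's own proof: you sum the Gauss equation against the Bochner curvature (\ref{a5}), split $L$ and $M$ via (\ref{a6})--(\ref{a8}) so the $J$-twisted sums condense into $\|T\|^{2}$ and $Ric(e_{i},Je_{j})g(e_{i},Je_{j})$, introduce $\epsilon$ with $n^{2}\|\mathcal{H}\|^{2}=(n-1)(\epsilon+\|\omega\|^{2})$, apply the algebraic lemma to the diagonal entries $\omega_{ii}^{n+1}$, and absorb the leftover second-fundamental-form terms into non-negative squares such as $(\omega_{11}^{r}+\omega_{22}^{r})^{2}$, exactly as in the paper's passage from (\ref{p15}) and (\ref{p17}) to (\ref{p20}). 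Your treatment of the equality case by tracing both inequalities back likewise matches the paper's conclusion of the forms (\ref{t2}) and (\ref{t3}).
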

\begin{proof}
Using Gauss equation, the Riemannian curvature tensor of $\mathcal{W}$ is given  by
\begin{eqnarray*}\label{p1}
\nonumber R(X,Y,Z,W) &=& L(Y,Z)g(X,W) - L(X,Z)g(Y,W) + L(X,W)g(Y,Z)
                          \nonumber \\ && - L(Y,W)g(X,Z)  + M(Y,Z)g(JX,W) - M(X,Z)g(JY,W)
                          \nonumber \\ && - M(X,W)g(JY,Z) - M(Y,W)g(JX,Z)  - 2M(X,Y)(JZ,W)
                           \nonumber \\ && - 2M(Z,W)g(JX,Y) + g(\omega(X,W), \omega(Y,Z)) - g(\omega(X,Z), \omega(Y,W))
\end{eqnarray*}
for any  X, Y, Z, W $\in$ T$\mathcal{W}$.
\begin{eqnarray*}\label{p2}
\sum_{i,j} R(e_{i}, e_{j}, e_{j}, e_{i}) &=& L(e_{j},e_{j})g(e_{i},e_{i}) - L(e_{i},e_{j})g(e_{j},e_{i}) + L(e_{i},e_{i})g(e_{j},e_{j})
                          \nonumber \\ && - L(e_{j},e_{i})g(e_{i},e_{j})  + M(e_{j},e_{j})g(Je_{i},e_{i}) - M(e_{i},e_{j})g(Je_{j},e_{i})
                          \nonumber \\ && - M(e_{i},e_{i})g(Je_{j},e_{j}) - M(e_{j},e_{i})g(Je_{i},e_{j})  - 2M(e_{i},e_{j})(Je_{j},e_{i})
                           \nonumber \\ && - 2M(e_{j},e_{i})g(Je_{i},e_{j}) + g(\omega(e_{i},e_{i}), \omega(e_{j},e_{j})) - g(\omega(e_{i},e_{j}), \omega(e_{j},e_{i}))
\end{eqnarray*}
\begin{eqnarray}\label{p3}
 \hspace{3cm}                       &=& L(e_{j},e_{j})g(e_{i},e_{i}) - L(e_{i},e_{j})g(e_{j},e_{i}) + L(e_{i},e_{i})g(e_{j},e_{j})
                         \nonumber \\ && - L(e_{j},e_{i})g(e_{i},e_{j})  - L(e_{j},Je_{j})g(Je_{i},e_{i}) + L(e_{i},Je_{j})g(Je_{j},e_{i})
                          \nonumber \\ && + L(e_{i},Je_{i})g(Je_{j},e_{j}) + L(e_{j},Je_{i})g(Je_{i},e_{j}) + 2L(e_{i},Je_{j})(Je_{j},e_{i})
                           \nonumber \\ && + 2L(e_{j},Je_{i})g(Je_{i},e_{j}) + g(\omega(e_{i},e_{i}), \omega(e_{j},e_{j})) - g(\omega(e_{i},e_{j}), \omega(e_{j},e_{i})).\nonumber \\
\end{eqnarray}
Using (\ref{a8}), (\ref{a9}) and (\ref{a11}) in (\ref{p3}), we have
\begin{eqnarray*}\label{p4}
\sum_{i,j} R(e_{i}, e_{j}, e_{j}, e_{i})&=&  2nL(e_{i}, e_{i}) - 2L(e_{i}, e_{j})g(e_{i}, e_{j}) + 6L(e_{i}, Je_{j})g(e_{i}, Je_{j})
                                    \nonumber \\ &&  + n^{2}\|\mathcal{H}\|^{2} - \|\omega\|^{2}.
\end{eqnarray*}
 Which  simplifies to,
\begin{eqnarray}\label{p5}
2\rho  &=&  2(n-1)L(e_{i}, e_{i}) + 6L(e_{i}, Je_{j})g(e_{i}, Je_{j}) + n^{2}\|\mathcal{H}\|^{2} - \|\omega\|^{2}.
\end{eqnarray}
 Combining (\ref{a6}) and (\ref{p5}), we have
\begin{eqnarray*}\label{p6}
\hspace{2cm}2\rho  &=& \frac{2(n-1)}{2n+4}Ric(e_{i}, e_{i}) - \frac{2(n-1)\rho}{2(2n+2)(2n+4)}g(e_{i}, e_{i})
        \nonumber \\ &&   + \frac{6}{2n+4} Ric(e_{i}, Je_{j})g(e_{i}, Je_{j}) - \frac{6 \rho}{2(2n+2)(2n+4)} g(e_{i}, Je_{j})g(e_{i}, Je_{j})
        \nonumber \\ &&   + n^{2}\|\mathcal{H}\|^{2} - \|\omega\|^{2}.
\end{eqnarray*}
or
\begin{eqnarray*}\label{p9}
\hspace{2cm}2\rho  &=& \frac{6n^{2} + 2n - 8 - 6\|T\|^{2}}{2(2n + 2)(2n + 4)}\rho + \frac{6}{2n+4} Ric(e_{i}, Je_{j})g(e_{i}, Je_{j})
       \nonumber \\ && + n^{2}\|\mathcal{H}\|^{2} - \|\omega\|^{2}.
\end{eqnarray*}
or
\begin{eqnarray*}\label{p10}
(2 - \frac{6n^{2} + 2n - 8 - 6\|T\|^{2}}{2(2n + 2)(2n + 4)})\rho = \frac{6}{2n + 4}Ric(e_{i}, Je_{j})g(e_{i}, Je_{j}) + n^{2}\|\mathcal{H}\|^{2} - \|\omega\|^{2}.
\end{eqnarray*}
Denoting by
\begin{eqnarray*}\label{p11}
\epsilon = (2 - \frac{6n^{2} + 2n - 8 - 6\|T\|^{2}}{2(2n + 2)(2n + 4)})\rho - \frac{n^{2}(n - 2)}{n - 1}\|\mathcal{H}\|^{2} - \frac{6}{2n + 4}Ric(e_{i}, Je_{j})g(e_{i}, Je_{j}),
\end{eqnarray*}
we obtain
\begin{eqnarray*}\label{p12}
\epsilon =  n^{2}\|\mathcal{H}\|^{2} - \|\omega\|^{2} - \frac{n^{2}(n - 2)}{n - 1}\|\mathcal{H}\|^{2}.
\end{eqnarray*}
or
\begin{eqnarray}\label{p13}
n^{2}\|\mathcal{H}\|^{2} = (n - 1)(\epsilon + \|\omega\|^{2}).
\end{eqnarray}

For chosen orthonormal basis, the above equation takes the form

\begin{eqnarray}\label{p14}
(\sum_{i=1}^{n}\omega_{ii}^{n+1})^{2} = (n-1)\left[\sum_{i=1}^{n}(\omega_{ii}^{n+1})^{2} + \sum_{i\ne j}(\omega_{ij}^{n+1})^{2} + \sum_{r=n+1}^{2m}\sum_{i,j=1}^{n}(\omega_{ij})^{2} + \epsilon \right].
\end{eqnarray}

Using lemma 1 in (\ref{p14}), we have

\begin{eqnarray}\label{p15}
2\omega_{11}^{n+1}\omega_{22}^{n+1} \geq \sum_{i\ne j}(\omega_{ij}^{n+1})^{2} + \sum_{r=n+1}^{2m}\sum_{i,j=1}^{n}(\omega_{ij}^{r})^{2} + \epsilon.
\end{eqnarray}

On the other hand, from Gauss equation we obtain

\begin{eqnarray}\label{p16}
\mathcal{K}(\pi) = L(e_{2}, e_{2}) + L(e_{1},e_{1}) + g(\omega(e_{1}, e_{1}), \omega(e_{2},e_{2}) - g(\omega(e_{1},e_{2}), \omega(e_{2},e_{1})).
\end{eqnarray}
Combing (\ref{a6}) and (\ref{p16}), we derive
\begin{eqnarray}\label{p17}
\mathcal{K}(\pi) = \frac{4n+3}{(2n+2)(2n+4)}\rho + \omega_{11}^{n+1}\omega_{22}^{n+1} + \sum_{r=n+2}^{2m}\omega_{11}^{r}\omega_{22}^{r} - \sum_{r=n+1}^{2m}(\omega_{12}^{r})^{2}.
\end{eqnarray}

Incorporating (\ref{p15}) in (\ref{p17}), we arrive at the inequality
\begin{eqnarray*}\label{p18}
\mathcal{K}(\pi) &\geq& \frac{1}{2}\sum_{i\ne j}(\omega_{ij}^{n+1})^{2} + \frac{1}{2}\sum_{r=n+1}^{2m}\sum_{i,j=1}^{n}(\omega_{ij}^{r})^{2} + \frac{1}{2}\epsilon  \nonumber \\ && + \frac{4n+3}{(2n+2)(2n+4)}\rho + \sum_{r=n+2}^{2m}\omega_{11}^{r}\omega_{22}^{r} - \sum_{r=n+1}^{2m}(\omega_{12}^{r})^{2}.
\end{eqnarray*}
Which implies that
\begin{eqnarray*}\label{p19}
\mathcal{K}(\pi) \geq \frac{4n+3}{(2n+2)(2n+4)}\rho + \frac{1}{2}\epsilon.
\end{eqnarray*}
or

\begin{eqnarray}\label{p20}
\nonumber \mathcal{K}(\pi) \geq (\frac{5n^{2}+31n+26+3\|T\|^{2}}{2(2n+2)(2n+4)})\rho - \frac{n^{2}(n-2)}{2(n-1)}\|\mathcal{H}\|^{2} - \frac{6}{2(2n+4)}Ric(e_{i},Je_{j})g(e_{i},Je_{j}). \\
\end{eqnarray}
If the equality in (\ref{t1}) at a point $p$ holds, then the inequality (\ref{p20}) become equality. In this case, we have

$\begin{cases}
\omega_{1j}^{n+1} = \omega_{2j}^{n+1} = \omega_{ij}^{n+1} = 0, \hspace{1cm} i \neq j >2,\\
\omega_{ij}^{r} = 0, \forall i \neq j,\hspace{.5cm}i,j = 3, ..., 2m, \hspace{.5cm}r= n+1, ..., 2m,\\
\omega_{11}^{r}+\omega_{22}^{r} = 0, \forall r=n+2, ..., 2m,\hspace{2cm}\\
\omega_{11}^{n+2}+\omega_{22}^{n+1} = ... = \omega_{11}^{m}+\omega_{22}^{m}=0.\\
\end{cases}$

Now, if we choose $e_{1},e_{2}$ such that $\omega_{12}^{n+1}$= 0 and we denote by $\alpha = \omega_{11}^{r}, \beta = \omega_{22}^{r}$, $\xi = \omega_{33}^{n+1} = ... = \omega_{33}^{r}$. Therefore by choosing the suitable orthonormal basis the shape operators take the desired forms.
\end{proof}
We conclude the following corollary from this theorem.
\begin{corollary}
Let $\mathcal{W}$ be a submanifold of  a Bochner Kaehler manifold $\overline{\mathcal{W}}$ which is Einstein. Then, for each point $x \in \mathcal{W}$ and each plane section $\pi \subset T_{x}\mathcal{W}$, we have
\begin{eqnarray*}\label{p21}
\mathcal{K}(\pi) \geq (\frac{5n^{2}+31n+26+3\|T\|^{2}}{2(2n+2)(2n+4)})\rho - \frac{n^{2}(n-2)}{2(n-1)}\|\mathcal{H}\|^{2} - \frac{6\lambda}{2(2n+4)} \|T\|^{2}.
\end{eqnarray*}
The equality at a point $x \in \mathcal{W}$ holds iff there exists an orthonormal basis $\{e_{1},e_{2}, ..., e_{n}\}$ of $ T_{x}\mathcal{W}$ and orthonormal basis  $\{e_{n+1},e_{n+2}, ..., e_{2m}\}$ of $T^{\perp}\mathcal{W}$ such that shape operators of $\mathcal{W}$ in $\overline{\mathcal{W}}$ at $x$ have the forms (\ref{t2}) and (\ref{t3}).
\end{corollary}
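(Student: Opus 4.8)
The plan is to obtain the corollary directly from inequality (\ref{t1}) of the preceding theorem, simply by specializing its final term to the Einstein setting. Observe that neither the left-hand side $\mathcal{K}(\pi)$ nor the first two terms on the right of (\ref{t1})---the coefficient of $\rho$ and the $\|\mathcal{H}\|^{2}$ term---involve the Ricci tensor explicitly, so the only quantity requiring rewriting is the summed expression $Ric(e_{i},Je_{j})g(e_{i},Je_{j})$. The entire argument therefore reduces to evaluating this one term under the hypothesis that $\mathcal{W}$ is Einstein.

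First I would invoke the Einstein condition $Ric(X,Y)=\lambda g(X,Y)$ from Definition~3.3 with $X=e_{i}$ and $Y=Je_{j}$, which gives $Ric(e_{i},Je_{j})=\lambda\, g(e_{i},Je_{j})$. Substituting this into the summed term produces $\sum_{i,j} Ric(e_{i},Je_{j})g(e_{i},Je_{j}) = \lambda\sum_{i,j} g(e_{i},Je_{j})^{2}$, so that the problem is now to identify the remaining sum with a known invariant.

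The key step is to recognize that $\sum_{i,j} g(e_{i},Je_{j})^{2}=\|T\|^{2}$. Using the decomposition $Je_{j}=Te_{j}+Fe_{j}$ together with the skew-symmetry of $J$ with respect to $g$, one has $g(e_{i},Je_{j})=-g(Je_{i},e_{j})=-g(Te_{i},e_{j})$, where the normal component $Fe_{i}$ drops out because it is orthogonal to the tangent vector $e_{j}$. Squaring removes the sign and yields $g(e_{i},Je_{j})^{2}=g^{2}(Te_{i},e_{j})$, whence $\sum_{i,j} g(e_{i},Je_{j})^{2}=\sum_{i,j} g^{2}(Te_{i},e_{j})=\|T\|^{2}$ by the definition recorded in the Preliminaries. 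Combining these identities gives $\frac{6}{2(2n+4)}Ric(e_{i},Je_{j})g(e_{i},Je_{j})=\frac{6\lambda}{2(2n+4)}\|T\|^{2}$, and inserting this into (\ref{t1}) delivers the stated inequality.

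The main (and essentially the only) obstacle is this identification $\sum_{i,j} g(e_{i},Je_{j})^{2}=\|T\|^{2}$, which rests on the skew-symmetry of the almost complex structure and on the orthogonality of the normal part $F$ to the tangent space; everything else is a direct substitution. For the equality case, since the Einstein hypothesis alters only the value of this single term and leaves untouched the chain of estimates used in the theorem---the application of Lemma~2.1 to (\ref{p14}) and the computation (\ref{p17}) derived from the Gauss equation---the equality analysis carries over verbatim. Hence equality holds precisely when the shape operators take the forms (\ref{t2}) and (\ref{t3}), completing the proof.
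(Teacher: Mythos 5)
Your proposal is correct and matches the paper's (implicit) derivation: the paper states this corollary as an immediate consequence of Theorem~3.1, obtained exactly as you do by substituting the Einstein condition $Ric(e_{i},Je_{j})=\lambda g(e_{i},Je_{j})$ and identifying $\sum_{i,j}g^{2}(e_{i},Je_{j})=\|T\|^{2}$ via the tangential part $T$ of $J$ (your detour through skew-symmetry is fine, though $g(e_{i},Je_{j})=g(e_{i},Te_{j})$ follows directly since $Fe_{j}$ is normal). Your observation that the equality analysis is untouched and carries over verbatim is likewise exactly what the paper asserts.
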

Similarly, in case if $\mathcal{W}$ is a slant submanifold of  a Bochner Kaehler manifold $\overline{\mathcal{W}}$. We have the following theorem
\begin{theorem}
Let $\mathcal{W}$ be a slant submanifold of  a Bochner Kaehler manifold $\overline{\mathcal{W}}$. Then, for each point $x \in \mathcal{W}$ and each plane section $\pi \subset T_{x}\mathcal{W}$, we have
\begin{eqnarray*}\label{p22}
\nonumber \mathcal{K}(\pi) \geq (\frac{5n^{2}+31n+26+3cos^{2}\theta}{2(2n+2)(2n+4)})\rho - \frac{n^{2}(n-2)}{2(n-1)}\|\mathcal{H}\|^{2} - \frac{6}{2(2n+4)}Ric(e_{i},Je_{j})cos\theta.\\
\end{eqnarray*}
Equality  holds if and only if there exists an orthonormal basis $\{e_{1} ,e_{2}, ... , e_{n}\}$ of $T_{x}\mathcal{W}$ and orthonormal basis $\{e_{n+1}, e_{n+2} , ... , e_{2m}\}$ of $T^{\perp}\mathcal{W}$ such that the shape operator takes the following forms

\begin{eqnarray}\label{p23}
B_{n+1} =
 \begin{pmatrix}
  \alpha & 0 & 0 & \cdots & 0 \\
  0 & \beta & 0 & \cdots & 0 \\
  0 & 0 & \xi &\cdots & 0 \\
  \vdots & \vdots & \vdots  & \ddots & \vdots  \\
  0 & 0 & 0 & \cdots & \xi
 \end{pmatrix}  , \alpha+\beta = \xi
\end{eqnarray}
and
\begin{eqnarray}\label{p24}
B_{r} =
 \begin{pmatrix}
  \omega_{11}^{r} &  \omega_{12}^{r} & 0 & \cdots & 0 \\
   \omega_{12}^{r} & - \omega_{11}^{r} & 0 & \cdots & 0 \\
  0 & 0 & 0 &\cdots & 0 \\
  \vdots & \vdots & \vdots  & \ddots & \vdots  \\
  0 & 0 & 0 & \cdots & 0
 \end{pmatrix}  , r = n+2, ..., 2m.
\end{eqnarray}
\end{theorem}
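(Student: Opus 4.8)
The plan is to deduce this theorem directly from Theorem 3.1, since that inequality was established for an \emph{arbitrary} submanifold $\mathcal{W}$ of $\overline{\mathcal{W}}$ and its proof used no special structure of $\mathcal{W}$ beyond the Gauss equation and Lemma 1. Consequently the only work is to re-express the two tangential invariants appearing on the right-hand side of (\ref{t1}) — namely $\|T\|^{2}$ and the contraction $Ric(e_{i},Je_{j})g(e_{i},Je_{j})$ — in terms of the slant angle $\theta$.

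To carry this out I would invoke the standard characterization of slant submanifolds: $\mathcal{W}$ is slant with angle $\theta$ if and only if the tangential part $T$ of $J$ satisfies $T^{2} = -\cos^{2}\theta\,\mathrm{Id}$ on $T_{x}\mathcal{W}$, equivalently $g(TX,TY) = \cos^{2}\theta\, g(X,Y)$ for all tangent $X,Y$. First I would compute $\|T\|^{2}$: since $\sum_{j} g^{2}(Te_{i},e_{j}) = g(Te_{i},Te_{i}) = \cos^{2}\theta$, summing over $i$ gives $\|T\|^{2} = n\cos^{2}\theta$, so that the coefficient $3\|T\|^{2}$ in Theorem 3.1 collapses to a pure $\theta$-dependence. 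Next, using $g(e_{i},Je_{j}) = g(e_{i},Te_{j})$ (because $Fe_{j}$ is normal to $T_{x}\mathcal{W}$) together with the slant relation, I would rewrite the last term so that one factor of $g(e_{i},Je_{j})$ is replaced by $\cos\theta$, producing the term $\tfrac{6}{2(2n+4)}Ric(e_{i},Je_{j})\cos\theta$ displayed in the statement. Substituting both expressions into (\ref{t1}) then yields the claimed inequality.

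For the equality case, I would observe that the slant hypothesis does not alter the chain of estimates in the proof of Theorem 3.1 — the inequality there arose solely from applying Lemma 1 to the shape-operator components $\omega_{ij}^{r}$, independently of $T$. Hence the equality analysis is verbatim the same, and the shape operators must again take the forms (\ref{p23}) and (\ref{p24}), which coincide with (\ref{t2}) and (\ref{t3}); I would therefore simply cite the equality discussion already completed in Theorem 3.1.

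The main obstacle will be the bookkeeping in the second term: one must track precisely which factor of $g(e_{i},Je_{j})$ is converted via the slant identity and confirm that the implicit summation conventions in (\ref{t1}) are compatible with replacing a single factor by $\cos\theta$ rather than $\cos^{2}\theta$. Reconciling the numerical constants — in particular matching $3\|T\|^{2} = 3n\cos^{2}\theta$ against the coefficient $3\cos^{2}\theta$ written in the statement, and pinning down the correct power of $\cos\theta$ in the Ricci contraction — is the one place where the substitution must be performed with genuine care rather than treated as routine.
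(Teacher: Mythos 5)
Your route is exactly the paper's: the paper gives no separate proof of this theorem at all (it is introduced with ``Similarly\dots we have the following theorem''), so the intended argument is precisely your specialization of Theorem 3.1 via the slant identities, with the equality case inherited verbatim since Lemma 1 is applied to the $\omega_{ij}^{r}$ independently of $T$. The two bookkeeping worries you flag at the end are genuine, but they are defects of the paper's statement rather than of your method, and you should not expect to resolve them cleanly: with the paper's definition $\|T\|^{2}=\sum_{i,j}g^{2}(Te_{i},e_{j})$, a $\theta$-slant submanifold has $g(TX,TY)=\cos^{2}\theta\,g(X,Y)$ and hence $\|T\|^{2}=n\cos^{2}\theta$, so the honest substitution into (\ref{t1}) yields $3n\cos^{2}\theta$, not the $3\cos^{2}\theta$ printed in the theorem; and the replacement of a single factor $g(e_{i},Je_{j})$ by $\cos\theta$ is not valid entrywise, since $g(e_{i},Je_{j})=g(e_{i},Te_{j})$ is merely a matrix entry of $T$ while only the norm $\|Te_{j}\|$ equals $\cos\theta$. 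The legitimate version of that step is to contract first, $\sum_{i}Ric(e_{i},Je_{j})g(e_{i},Je_{j})=Ric(Te_{j},Je_{j})$, and then normalize, writing $Ric(Te_{j},Je_{j})=\cos\theta\,Ric(\tilde{e}_{j},Je_{j})$ with $\tilde{e}_{j}=(\sec\theta)Te_{j}$ a unit vector; the paper's term $Ric(e_{i},Je_{j})\cos\theta$ is evidently this identity written with its loose summation conventions. So your plan is the correct (and the intended) one, and carrying it out rigorously in fact exposes the factor-of-$n$ discrepancy in the published coefficient.
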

From this theorem, following corollaries can be easily deduced.
\begin{corollary}
Let $\mathcal{W}$ be a slant submanifold of  a Bochner Kaehler manifold $\overline{\mathcal{W}}$, which is Einstein . Then, for each point $x \in \mathcal{W}$ and each plane section $\pi \subset T_{x}\mathcal{W}$, we have
\begin{eqnarray*}\label{p25}
\nonumber \mathcal{K}(\pi) \geq (\frac{5n^{2}+31n+26+3cos^{2}\theta}{2(2n+2)(2n+4)})\rho - \frac{n^{2}(n-2)}{2(n-1)}\|\mathcal{H}\|^{2} - \frac{6\lambda}{2(2n+4)}cos^{2}\theta.\\
\end{eqnarray*}
The equality  holds at a point $x \in \mathcal{W}$ if and only if there exists an orthonormal basis $\{e_{1},e_{2}, ..., e_{n}\}$ of $ T_{x}\mathcal{W}$  and orthonormal basis  $\{e_{n+1},e_{n+2}, ..., e_{2m}\}$ of $T^{\perp}\mathcal{W}$ such that shape operators of $\mathcal{W}$ in $\overline{\mathcal{W}}$ at $x$ have the forms (\ref{p23}) and (\ref{p24}).
\end{corollary}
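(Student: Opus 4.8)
The plan is to follow the argument of Theorem 3.1 almost verbatim and to feed in the defining property of a slant submanifold only at the final substitution step. First I would record the one extra ingredient that the slant hypothesis provides. Writing $JX=TX+FX$ as in the preliminaries, the tangential part $T$ is skew-symmetric ($g(TX,Y)=-g(X,TY)$), and for a slant submanifold with slant angle $\theta$ one has $T^{2}=-\cos^{2}\theta\,I$, equivalently $g(TX,TY)=\cos^{2}\theta\,g(X,Y)$ for all tangent $X,Y$. Evaluating this in an orthonormal frame gives $\|T\|^{2}=\sum_{i,j}g^{2}(Te_{i},e_{j})=\sum_{i}g(Te_{i},Te_{i})=n\cos^{2}\theta$, and it also shows that the metric components $g(e_{i},Je_{j})=g(e_{i},Te_{j})$ appearing in the Ricci term are governed by $\cos\theta$. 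These are the only facts beyond what the proof of Theorem 3.1 already establishes.

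Next I would reproduce the Gauss-equation computation of Theorem 3.1 without change. That derivation used only the Bochner Kaehler curvature expression (\ref{a5}) together with the definitions (\ref{a6})--(\ref{a8}) and the algebraic Lemma in Section 2; none of it is special to the non-slant case. Consequently it delivers exactly the same intermediate estimate
$$
\mathcal{K}(\pi)\;\geq\;\frac{4n+3}{(2n+2)(2n+4)}\,\rho+\tfrac{1}{2}\epsilon,
$$
where $\epsilon$ is the quantity defined in the proof of Theorem 3.1 as a combination of $\rho$, $\|\mathcal{H}\|^{2}$, the factor $\|T\|^{2}$ inside the coefficient of $\rho$, and the trace-type term $Ric(e_{i},Je_{j})\,g(e_{i},Je_{j})$.

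I would then simply insert the slant identities into $\epsilon$. Replacing $\|T\|^{2}$ in the coefficient of $\rho$ by its slant value $n\cos^{2}\theta$ and the metric factor $g(e_{i},Je_{j})$ in the Ricci term by $\cos\theta$ converts the intermediate inequality into the asserted one, with $\cos^{2}\theta$ and $\cos\theta$ taking the places of $\|T\|^{2}$ and $g(e_{i},Je_{j})$. The equality analysis is inherited verbatim from Theorem 3.1: equality forces the equality case of the algebraic lemma, namely $\omega_{11}^{n+1}+\omega_{22}^{n+1}=\omega_{33}^{n+1}=\cdots$, together with the vanishing of the off-diagonal components recorded there; choosing $e_{1},e_{2}$ so that $\omega_{12}^{n+1}=0$ then puts the shape operators into the forms (\ref{p23}) and (\ref{p24}).

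The main obstacle I anticipate is purely bookkeeping rather than geometric. One must be careful that the index conventions in the term $Ric(e_{i},Je_{j})\,g(e_{i},Je_{j})$ are made precise enough to justify its collapse to $Ric(e_{i},Je_{j})\cos\theta$ under the slant hypothesis, and that the normalisation $\|T\|^{2}=n\cos^{2}\theta$ is reconciled with the coefficient written in the statement. No genuinely new input is required beyond the slant relation $g(TX,TY)=\cos^{2}\theta\,g(X,Y)$, so the entire content of the theorem reduces to substituting this relation into the estimate already proved in Theorem 3.1.
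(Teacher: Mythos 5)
Your proposal has a genuine gap: it proves the wrong statement. What you outline --- rerunning the proof of Theorem 3.1 and substituting the slant relations $g(Te_{i},Te_{j})=\cos^{2}\theta\, g(e_{i},e_{j})$ into $\epsilon$ --- is precisely the paper's Theorem 3.2, whose conclusion still contains the curvature term $\frac{6}{2(2n+4)}Ric(e_{i},Je_{j})\cos\theta$. The corollary you were asked to prove carries the additional hypothesis that the ambient Bochner Kaehler manifold is Einstein, and its bound has $\frac{6\lambda}{2(2n+4)}\cos^{2}\theta$ in place of the Ricci term. Nowhere in your argument do you invoke the Einstein condition, and the constant $\lambda$ never appears; your final inequality is therefore not the one in the statement. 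The paper's (implicit) proof is exactly this one extra substitution: by Definition 2.3, $Ric(X,Y)=\lambda g(X,Y)$, so $Ric(e_{i},Je_{j})=\lambda g(e_{i},Je_{j})=\lambda g(e_{i},Te_{j})$, and the slant condition converts the factor $g(e_{i},Te_{j})$ into $\cos\theta$, turning $Ric(e_{i},Je_{j})\cos\theta$ into $\lambda\cos^{2}\theta$. That step is short, but it is the entire content of the corollary beyond Theorem 3.2, and it is missing.

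A secondary unresolved point, which you flagged yourself but left open: your normalisation $\|T\|^{2}=\sum_{i,j}g^{2}(Te_{i},e_{j})=n\cos^{2}\theta$ does not match the coefficient $3\cos^{2}\theta$ (rather than $3n\cos^{2}\theta$) appearing in the statement. The paper evidently treats expressions such as $\|T\|^{2}$ and $Ric(e_{i},Je_{j})g(e_{i},Je_{j})$ as per-index quantities rather than full traces when passing to the slant case, replacing $\|T\|^{2}$ by $\cos^{2}\theta$ and each factor $g(e_{i},Je_{j})$ by $\cos\theta$. If you insist on the trace convention $\|T\|^{2}=n\cos^{2}\theta$, you obtain a different constant in the coefficient of $\rho$, so this bookkeeping must be settled one way or the other before the substitution step; as written, your proof would not reproduce the stated inequality even after the Einstein substitution is added.
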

\begin{corollary}
Let $\mathcal{W}$ be a invariant submanifold of  a Bochner Kaehler manifold $\overline{\mathcal{W}}$ . Then, for each point $x\in \mathcal{W}$ and each plane section $\pi \subset T_{x}\mathcal{W}$, we have
\begin{eqnarray*}\label{p26}
 \mathcal{K}(\pi) \geq (\frac{5n^{2}+31n+26+3}{2(2n+2)(2n+4)})\rho - \frac{n^{2}(n-2)}{2(n-1)}\|\mathcal{H}\|^{2} - \frac{6}{2(2n+4)}Ric(e_{i}, Je_{j}).
\end{eqnarray*}
The equality  at a point $x\in \mathcal{W}$ holds iff there exists an orthonormal basis $\{e_{1},e_{2}, ..., e_{n}\}$ of $ T_{x}\mathcal{W}$ and orthonormal basis  $\{e_{n+1},e_{n+2}, ..., e_{2m}\}$ of $T^{\perp}\mathcal{W}$ such that shape operators of $\mathcal{W}$ in $\overline{\mathcal{W}}$ at $x$ have the forms (\ref{p23}) and (\ref{p24}).
\end{corollary}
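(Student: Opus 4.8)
The plan is to obtain this statement as the specialization of Theorem 3.3 to slant angle $\theta = 0$. By the definition of slant submanifold given after Lemma 2.1, an invariant submanifold is exactly a slant submanifold whose slant angle vanishes; hence every hypothesis of Theorem 3.3 is automatically satisfied and its conclusion applies directly, with no independent Gauss-equation computation required.

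Concretely, I would begin from the slant inequality of Theorem 3.3,
$$\mathcal{K}(\pi) \geq \left(\frac{5n^{2}+31n+26+3\cos^{2}\theta}{2(2n+2)(2n+4)}\right)\rho - \frac{n^{2}(n-2)}{2(n-1)}\|\mathcal{H}\|^{2} - \frac{6}{2(2n+4)}Ric(e_{i},Je_{j})\cos\theta,$$
and set $\theta = 0$. Since $\cos 0 = 1$ and $\cos^{2} 0 = 1$, the summand $3\cos^{2}\theta$ in the numerator collapses to $3$, producing $5n^{2}+31n+26+3$, while the factor $\cos\theta$ multiplying $Ric(e_{i},Je_{j})$ becomes $1$. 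This reproduces the asserted inequality verbatim. The equality statement is inherited in the same way: the shape-operator normal forms (\ref{p23}) and (\ref{p24}) that characterize equality in Theorem 3.3 are precisely those claimed here, so the biconditional transfers unchanged.

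The only point deserving a moment's care --- a consistency check rather than a genuine obstacle --- is to confirm that the tangential operator $T$ behaves as the $\theta = 0$ endpoint demands. For an invariant submanifold, $JX$ is tangent for every $X \in T_{x}\mathcal{W}$, so that $FX = 0$ and $JX = TX$; thus $T$ restricts to an almost complex structure on $T_{x}\mathcal{W}$, exactly the limiting case of the slant relation underpinning Theorem 3.3. Once this is noted, the corollary follows immediately as the $\theta = 0$ endpoint of the slant inequality, requiring no further calculation.
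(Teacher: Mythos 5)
Your proposal is correct and matches the paper's own (implicit) argument: the paper states these corollaries are ``easily deduced'' from Theorem 3.3, which is precisely your specialization to slant angle $\theta = 0$ using $\cos\theta = \cos^{2}\theta = 1$, with the equality case inherited verbatim via the shape-operator forms (\ref{p23}) and (\ref{p24}). Your added consistency check that $F = 0$ and $T = J$ on an invariant submanifold is a sensible (if strictly unnecessary) confirmation of the $\theta = 0$ endpoint.
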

\begin{corollary}
Let $\mathcal{W}$ be a anti-invariant submanifold of  a Bochner Kaehler manifold $\overline{\mathcal{W}}$ . Then, for each point $x \in \mathcal{W}$ and each plane section $\pi \subset T_{x}\mathcal{W}$, we have
\begin{eqnarray*}\label{p26}
 \mathcal{K}(\pi) \geq (\frac{5n^{2}+31n+26}{2(2n+2)(2n+4)})\rho - \frac{n^{2}(n-2)}{2(n-1)}\|\mathcal{H}\|^{2}.
\end{eqnarray*}
The equality  at a point $x \in \mathcal{W}$ holds iff there exists an orthonormal basis $\{e_{1},e_{2}, ..., e_{n}\}$ of $ T_{x}\mathcal{W}$ and orthonormal basis  $\{e_{n+1},e_{n+2}, ..., e_{2m}\}$ of $T^{\perp}\mathcal{W}$ such that shape operators of $\mathcal{W}$ in $\overline{\mathcal{W}}$ at $x$ have the forms  (\ref{p23}) and (\ref{p24}).
\end{corollary}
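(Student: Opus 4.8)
The plan is to read off this corollary as the $\theta = \frac{\pi}{2}$ case of the preceding slant-submanifold estimate (Theorem 3.3), exactly as the author signals by saying the corollaries ``can be easily deduced.'' By Definition 2.1 an anti-invariant submanifold is a slant submanifold of slant angle $\theta = \frac{\pi}{2}$, so morally one only has to set $\cos\theta = 0$. First I would isolate the two geometric facts that make this substitution legitimate. Since $\theta = \frac{\pi}{2}$, the angle between $JX$ and $T_x\mathcal{W}$ is a right angle for every tangent $X$; hence $JX$ has vanishing tangential part, i.e.\ $TX = 0$ and $JX = FX \in T^\perp\mathcal{W}$ for all $X$. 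This immediately gives $\|T\|^2 = \sum_{i,j} g^2(Te_i,e_j) = 0$ and, equivalently, $\cos\theta = 0$.

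Next I would track the two $\theta$-dependent terms of Theorem 3.3 through this specialization. The coefficient of $\rho$, namely $\frac{5n^2+31n+26+3\cos^2\theta}{2(2n+2)(2n+4)}$, collapses to $\frac{5n^2+31n+26}{2(2n+2)(2n+4)}$ once $\cos^2\theta = 0$. The final term $-\frac{6}{2(2n+4)}Ric(e_i,Je_j)\cos\theta$ vanishes for two independent reasons: the explicit factor $\cos\theta$ is zero, and the underlying pairing is itself zero because $Je_j = Fe_j$ is normal while $e_i$ is tangent, so $g(e_i,Je_j) = 0$. Substituting these into the slant inequality yields precisely
\begin{eqnarray*}
\mathcal{K}(\pi) \geq \left(\frac{5n^2+31n+26}{2(2n+2)(2n+4)}\right)\rho - \frac{n^2(n-2)}{2(n-1)}\|\mathcal{H}\|^2 .
\end{eqnarray*}
As a cross-check one can instead start from Theorem 3.1 and insert $\|T\|^2 = 0$ together with $g(e_i,Je_j) = 0$; the same expression results.

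For the equality discussion I would inherit it verbatim from Theorem 3.3. The anti-invariant bound is obtained by a term-by-term specialization that introduces no new inequality; the single inequality in the derivation remains the application of the elementary inequality of Lemma 2.1, whose equality case is governed by the shape-operator normal forms (\ref{p23}) and (\ref{p24}). Hence equality at $x$ holds if and only if there exist orthonormal bases of $T_x\mathcal{W}$ and $T^\perp\mathcal{W}$ bringing the shape operators into the forms (\ref{p23}) and (\ref{p24}).

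Since the argument is a pure specialization, there is no genuine analytic obstacle. The only point demanding care is the algebraic bookkeeping in the $\rho$-coefficient: one must confirm that the $3\cos^2\theta$ (equivalently $3\|T\|^2$) contribution lives entirely inside the numerator and vanishes cleanly, without altering the surviving $5n^2+31n+26$, and that no stray $\cos\theta$ hides in the mean-curvature term. Once the two vanishing facts $T \equiv 0$ and $g(e_i,Je_j)=0$ are in place, the rest is immediate.
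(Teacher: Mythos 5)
Your proposal is correct and coincides with the paper's own (implicit) argument: the paper presents this corollary as an immediate specialization of the slant inequality of Theorem 3.3 to the anti-invariant case $\theta = \frac{\pi}{2}$, i.e.\ $\cos\theta = 0$ and $\|T\|^{2}=0$, with the equality case inherited unchanged via the shape-operator forms (\ref{p23}) and (\ref{p24}). Your additional observations --- that $g(e_i, Je_j)=0$ because $Je_j$ is normal, and the cross-check against Theorem 3.1 --- are sound but not needed beyond what the paper intends.
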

\section{Warped product of CR-submanifolds of Bochner Kaehler manifolds}
Let $\mathcal{W} = \mathcal{W}_{T}\times_{f}\mathcal{W}_{\perp}$ be the warped product CR-submanifolds of Bochner Kaehler manifold $\overline{\mathcal{W}}$ such that the invariant distribution is $D = T\mathcal{W}_{T}$ and anti-invariant distribution is $D^{\perp} = T\mathcal{W}_{\perp}$, where $f: \mathcal{W}_{T} \longrightarrow \mathbb{R}$. Then the metric $g$ on $\mathcal{W}$ is given by \cite{chen warp}
\begin{equation*}\label{w1}
g(X,Y) =  \langle \pi_{*}X, \pi_{*}Y\rangle + (f\circ\pi)^{2}\langle \sigma_{*}X, \sigma_{*}Y\rangle
\end{equation*}
where $\pi$ and $\sigma$ are the projection maps from $\mathcal{W}$ onto $\mathcal{W}_{T}$ and $\mathcal{W}_{\perp}$ respectively.

%

It is easy to see that
\begin{eqnarray}\label{w6}
T\mathcal{W} = D \oplus D^{\perp}\hspace{.3cm} \text{and} \hspace{.3cm} T^{\perp}\mathcal{W} = JD^{\perp}\oplus \nu,
\end{eqnarray}
where $\nu$ is the orthogonal distribution to $JD^{\perp}$ in the normal bundle $T^{\perp}\mathcal{W}.$

From (\ref{w6}), we can write
\begin{eqnarray*}\label{w7}
\omega(X,Y) = \omega_{JD^{\perp}}(X,Y) + \omega_{\nu}(X,Y)
\end{eqnarray*}
Also for warped product submanifold $\mathcal{W}$ of $\overline{\mathcal{W}}$, we have \cite{chen warp}
\begin{eqnarray}\label{w8}
\nabla_{X}Z = X(logf)Z = \frac{X(f)}{f}Z
\end{eqnarray}
for any vector fields $X \in D$ and $Z \in D^{\perp}$.

Further, we can decompose  $(\overline{\nabla}_{X}J)Y$ into the tangential and normal components  as under
\begin{eqnarray}\label{w9}
(\overline{\nabla}_{X}J)Y = \mathcal{P}_{X}Y + \mathcal{Q}_{X}Y
\end{eqnarray}
where $\mathcal{P}_{X}Y$ and  $\mathcal{Q}_{X}Y$ denotes the tangential and normal components of $(\overline{\nabla}_{X}J)Y$

First we prove the following lemma
\begin{lemma}
Let $\mathcal{W} = \mathcal{W}_{T}\times_{f}\mathcal{W}_{\perp}$ be a CR-warped product submanifold of a Bochner Kaehler manifold $\overline{\mathcal{W}}$. Then we have
\begin{equation*}
\omega_{JD^{\perp}}(JX,Z) = J\mathcal{P}_{Z}JX + X(log f)JZ
\end{equation*}
\begin{equation*}
g(\mathcal{P}_{Z}JX, W) = g(\mathcal{Q}_{Z}X, JW)
\end{equation*}
 and
\begin{equation*}
g(\omega(JX,Z), J\omega(X,Z)) -\|\omega_{\nu}(X,Z)\|^{2} = g(\mathcal{Q}_{Z}X, J\omega_{\nu}(X,Z))
\end{equation*}
 for $X \in D$ and $Z \in D^{\perp}.$
\end{lemma}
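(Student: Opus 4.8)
The plan is to extract all three identities from a single computation: expressing $\overline{\nabla}_{Z}(JX)$ in two ways for $X\in D$, $Z\in D^{\perp}$, and comparing components relative to the two orthogonal splittings $T\mathcal{W}=D\oplus D^{\perp}$ and $T^{\perp}\mathcal{W}=JD^{\perp}\oplus\nu$ from (\ref{w6}). The algebraic inputs I would record first are the Leibniz rule $\overline{\nabla}_{Z}(JU)=(\overline{\nabla}_{Z}J)U+J\,\overline{\nabla}_{Z}U$ together with (\ref{w9}); the identity $(\overline{\nabla}_{Z}J)J=-J(\overline{\nabla}_{Z}J)$, obtained by differentiating $J^{2}=-I$; the skew-symmetry $g\big((\overline{\nabla}_{Z}J)U,V\big)=-g\big(U,(\overline{\nabla}_{Z}J)V\big)$, coming from $g(JU,V)=-g(U,JV)$ and metric compatibility; and the fact that $J$ preserves each of $D$, $D^{\perp}$ (via $J(JD^{\perp})=D^{\perp}$) and $\nu$, so that $J$ sends tangential $D^{\perp}$-vectors to normal $JD^{\perp}$-vectors and keeps $\nu$ normal. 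The Gauss formula (\ref{a1}) and the warped-product relation $\nabla_{Z}X=\nabla_{X}Z=X(\log f)Z$ from (\ref{w8}) supply the geometric content.

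For the first identity I would start from $(\overline{\nabla}_{Z}J)(JX)=\overline{\nabla}_{Z}(J^{2}X)-J\,\overline{\nabla}_{Z}(JX)=-\overline{\nabla}_{Z}X-J\,\overline{\nabla}_{Z}(JX)$, apply $J$, and use $J^{2}=-I$ to solve for $\overline{\nabla}_{Z}(JX)=J\mathcal{P}_{Z}JX+J\mathcal{Q}_{Z}JX+J\,\overline{\nabla}_{Z}X$. Substituting $\overline{\nabla}_{Z}X=X(\log f)Z+\omega(X,Z)$ from (\ref{a1}) and (\ref{w8}) gives $J\,\overline{\nabla}_{Z}X=X(\log f)JZ+J\omega(X,Z)$. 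Equating this with the Gauss expansion $\overline{\nabla}_{Z}(JX)=\nabla_{Z}(JX)+\omega(JX,Z)$ and projecting the normal part onto $JD^{\perp}$, the term $J\omega(X,Z)$ contributes only its $\nu$-valued piece $J\omega_{\nu}(X,Z)$ (which lies in $\nu$ and drops), while $J\mathcal{Q}_{Z}JX$ contributes only to $\nu$; what survives is $\omega_{JD^{\perp}}(JX,Z)=J\mathcal{P}_{Z}JX+X(\log f)JZ$, as claimed.

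The second identity I would prove purely algebraically. Since $W\in D^{\perp}$ is tangential, $g(\mathcal{P}_{Z}JX,W)=g\big((\overline{\nabla}_{Z}J)JX,W\big)$; applying skew-symmetry and then orthogonality of $JX\in D$ to the normal term $\mathcal{Q}_{Z}W$ yields $g(\mathcal{P}_{Z}JX,W)=-g(JX,\mathcal{P}_{Z}W)$. On the other side, since $JW\in JD^{\perp}$ is normal, $g(\mathcal{Q}_{Z}X,JW)=g\big((\overline{\nabla}_{Z}J)X,JW\big)$; skew-symmetry followed by $(\overline{\nabla}_{Z}J)JW=-J(\overline{\nabla}_{Z}J)W$ and the $J$-isometry relation again gives $-g(JX,\mathcal{P}_{Z}W)$ (the $\mathcal{Q}_{Z}W$ contribution vanishing because $JX$ is tangential). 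Both sides thus collapse to the common value $-g(JX,\mathcal{P}_{Z}W)$.

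For the third identity I would feed the first identity back in through the normal part of the two-way expansion, which reads $\omega_{\nu}(JX,Z)=(\mathcal{Q}_{Z}X)_{\nu}+J\omega_{\nu}(X,Z)$. Expanding $g(\omega(JX,Z),J\omega(X,Z))$ via $\omega=\omega_{JD^{\perp}}+\omega_{\nu}$ and discarding cross terms killed by the $JD^{\perp}\perp\nu$ splitting (here $J\omega_{JD^{\perp}}(X,Z)$ is tangential and $J\omega_{\nu}(X,Z)\in\nu$) leaves $g(\omega_{\nu}(JX,Z),J\omega_{\nu}(X,Z))$; inserting the expression for $\omega_{\nu}(JX,Z)$ and using $\|J\omega_{\nu}(X,Z)\|^{2}=\|\omega_{\nu}(X,Z)\|^{2}$ isolates exactly $g(\mathcal{Q}_{Z}X,J\omega_{\nu}(X,Z))$ after moving the squared-norm term across. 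I expect the main obstacle to be precisely this component bookkeeping: one must track the four-way decomposition carefully and verify that every stray tangential or $\nu$-valued piece is annihilated by the relevant projection, which is the one place where the $J$-invariance of $\nu$ and of the distributions is genuinely used; the geometric formulas (\ref{a1}) and (\ref{w8}) enter only to produce the $X(\log f)JZ$ term.
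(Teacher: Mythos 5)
Your proposal is correct, and for the first and third identities it travels essentially the paper's own route. The paper likewise expands $\overline{\nabla}_{Z}(JX)$ using the Gauss formula (\ref{a1}), the warped-product relation (\ref{w8}) and the decomposition (\ref{w9}), performs the substitution $X\mapsto JX$, and compares components against the splittings $T\mathcal{W}=D\oplus D^{\perp}$ and $T^{\perp}\mathcal{W}=JD^{\perp}\oplus\nu$, arriving at (\ref{w16}) and at the normal relation (\ref{w19}), $\omega(JX,Z)-J\omega_{\nu}(X,Z)=\mathcal{Q}_{Z}X+X(\log f)JZ$; your third identity is then exactly the paper's pairing of (\ref{w19}) with $J\omega(X,Z)$, with the same cross-term cancellations from $JD^{\perp}\perp\nu$ and $g(J\omega_{\nu},J\omega_{\nu})=\|\omega_{\nu}\|^{2}$. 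Where you genuinely diverge is the second identity: the paper gets it by pairing (\ref{w19}) with $JW$ and substituting (\ref{w16}), so there it is a corollary of the warped-product computation; you instead prove it pointwise-algebraically from the skew-adjointness of $\overline{\nabla}_{Z}J$ and the anticommutation $(\overline{\nabla}_{Z}J)J=-J(\overline{\nabla}_{Z}J)$, collapsing both sides to $-g(JX,\mathcal{P}_{Z}W)$. That argument is sound (skew-adjointness of $\overline{\nabla}_{Z}J$ holds for the Levi-Civita connection of any almost Hermitian metric, which covers the generality in which the paper keeps $\mathcal{P},\mathcal{Q}$), and it buys something real: it uses neither $f$ nor the warping, only the $J$-invariance of $D$ and the normal splitting, so it isolates the second identity as a purely algebraic fact rather than a consequence of the CR-warped structure.

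One bookkeeping point you should make explicit in the first identity. Projecting your two-way expansion onto $JD^{\perp}$ yields, as written, only $\omega_{JD^{\perp}}(JX,Z)=J\big((\mathcal{P}_{Z}JX)_{D^{\perp}}\big)+X(\log f)JZ$: you still owe the fact that $\mathcal{P}_{Z}JX$ has no $D$-component, without which $J\mathcal{P}_{Z}JX$ need not be normal at all and the stated identity does not parse. This is true but requires the tangential comparison of the same equation. The paper does it in the opposite order: comparing tangential parts first gives $\mathcal{P}_{Z}JX+J\omega_{JD^{\perp}}(JX,Z)+X(\log f)Z=0$, which exhibits $\mathcal{P}_{Z}JX\in D^{\perp}$ for free before $J$ is applied. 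Equivalently, in your setup the $D$-component of the expansion, together with $\nabla_{Z}(JX)=JX(\log f)Z\in D^{\perp}$ from (\ref{w8}) applied to $JX\in D$, forces $(\mathcal{P}_{Z}JX)_{D}=0$. This is a one-line repair, not a flaw in the method, but as the proposal stands the $JD^{\perp}$-projection step is incomplete without it.
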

\begin{proof}
From Gauss equation, we have
\begin{equation*}\label{w10}
\nabla_{Z}JX + \omega(JX,Z) -J(\nabla_{Z}X) - J\omega(X,Z) = \mathcal{P}_{Z}X + \mathcal{Q}_{Z}X
\end{equation*}
Using (\ref{w8}), we infer
\begin{equation*}\label{w11}
\omega(JX,Z)= \mathcal{P}_{Z}X + \mathcal{Q}_{Z}X + J[X(log f)Z] + J\omega(X,Z) - JX(log f)Z .
\end{equation*}
Replace  $X$ by $JX$, we get
\begin{equation*}\label{w12}
-\omega(X,Z)= \mathcal{P}_{Z}JX + \mathcal{Q}_{Z}JX + JX(log f)JZ + J\omega(JX,Z) + X(log f)Z.
\end{equation*}
We can write the above equation as
\begin{equation}\label{w13}
-\omega(X,Z)= \mathcal{P}_{Z}JX + \mathcal{Q}_{Z}JX + JX(log f)JZ + J\omega_{JD^{\perp}}(JX,Z) + J\omega_{\nu}(JX,Z) + X(log f)Z.
\end{equation}
On comparing the tangential components, we obtain
\begin{equation*}\label{w14}
\mathcal{P}_{Z}JX  + J\omega_{JD^{\perp}}(JX,Z)  + X(log f)Z = 0,
\end{equation*}
or
\begin{equation*}\label{w15}
J\omega_{JD^{\perp}}(JX,Z) = -\mathcal{P}_{Z}JX  - X(log f)Z
\end{equation*}
which shows that
\begin{equation}\label{w16}
\omega_{JD^{\perp}}(JX,Z) = J\mathcal{P}_{Z}JX  +  X(log f)JZ,
\end{equation}
for $X \in D$ and $Z \in D^{\perp}$.

Again on comparing the normal components in (\ref{w13}), we have
\begin{equation*}\label{w17}
-\omega(X,Z)= \mathcal{Q}_{Z}JX + JX(log f)JZ  + J\omega_{\nu}(JX,Z)
\end{equation*}
from which we conclude that
\begin{equation*}\label{w18}
\omega(JX,Z)= \mathcal{Q}_{Z}X + X(log f)JZ  + J\omega_{\nu}(X,Z)
\end{equation*}
or
\begin{equation}\label{w19}
\omega(JX,Z) -  J\omega_{\nu}(X,Z)= \mathcal{Q}_{Z}X + X(log f)JZ
\end{equation}
By taking the inner product (\ref{w19}) with $JW$, we get
\begin{equation}\label{w20}
g(\omega_{JD^{\perp}}(JX,Z), JW) = g(\mathcal{Q}_{Z}X,JW) + X(log f)g(JZ,JW)
\end{equation}
Further using (\ref{w16}) in (\ref{w20}), we have
\begin{equation*}\label{w21}
g(\mathcal{P}_{Z}JX, W)  +  X(log f)g(Z, W) = g(\mathcal{Q}_{Z}X,JW) + X(log f)g(Z, W)
\end{equation*}
from which we conclude that
\begin{equation}\label{w22}
g(\mathcal{P}_{Z}JX, W)   = g(\mathcal{Q}_{Z}X,JW)
\end{equation}
Also, by taking the inner product of (\ref{w19}) with $J\omega(X,Z)$, we find
\begin{equation*}\label{w23}
g(\omega(JX,Z),J\omega(X,Z)) - \|\omega_{\nu}(X,Z)\|^{2}   = g(\mathcal{Q}_{Z}X,J\omega_{\nu}(X,Z)).
\end{equation*}
\end{proof}
\begin{theorem}
Let $\mathcal{W} = \mathcal{W}_{T}\times_{f}\mathcal{W}_{\perp}$ be a warped product CR-submanifolds of Bochner Kaehler manifold $\overline{\mathcal{W}}$ with $\mathcal{P}_{D^{\perp}}D \in D$, then the squared norm of second fundamental form of $\mathcal{W}$ in $\overline{\mathcal{W}}$ satisfies the following inequality
\begin{equation*}\label{w24}
\|\omega\|^{2} \geq  \|\mathcal{P}_{D^{\perp}}D\|^{2} + q\|grad_{D}(log f)\|^{2}.
\end{equation*}
\end{theorem}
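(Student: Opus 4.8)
The plan is to bound $\|\omega\|^{2}$ from below by isolating the components of the second fundamental form that mix the invariant distribution $D$ with the anti-invariant distribution $D^{\perp}$, and then to evaluate those components through the identity (\ref{w16}) of the preceding lemma. First I would fix an orthonormal basis $\{X_{1},\dots,X_{p}\}$ of $D = T\mathcal{W}_{T}$ (with $p$ even, since $D$ is $J$-invariant) and an orthonormal basis $\{Z_{1},\dots,Z_{q}\}$ of $D^{\perp} = T\mathcal{W}_{\perp}$, so that their union is an orthonormal basis of $T_{x}\mathcal{W}$ with $p+q=n$. Splitting the defining sum (\ref{a11}) according to whether each argument lies in $D$ or in $D^{\perp}$, and using the symmetry of $\omega$, gives
$$\|\omega\|^{2} = \sum_{a,b}\|\omega(X_{a},X_{b})\|^{2} + 2\sum_{a,\alpha}\|\omega(X_{a},Z_{\alpha})\|^{2} + \sum_{\alpha,\beta}\|\omega(Z_{\alpha},Z_{\beta})\|^{2}.$$
Discarding the first and third sums, which are nonnegative, reduces the problem to bounding the mixed sum $\sum_{a,\alpha}\|\omega(X_{a},Z_{\alpha})\|^{2}$ from below.

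Next I would exploit the orthogonal splitting $T^{\perp}\mathcal{W} = JD^{\perp}\oplus\nu$ from (\ref{w6}) to write $\omega(X_{a},Z_{\alpha}) = \omega_{JD^{\perp}}(X_{a},Z_{\alpha}) + \omega_{\nu}(X_{a},Z_{\alpha})$, so that $\|\omega(X_{a},Z_{\alpha})\|^{2}\geq\|\omega_{JD^{\perp}}(X_{a},Z_{\alpha})\|^{2}$. The $JD^{\perp}$-component is exactly what the lemma controls: by (\ref{w16}) we have $\omega_{JD^{\perp}}(JX,Z) = J\mathcal{P}_{Z}JX + X(log f)JZ$. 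Taking the squared norm and using that $J$ is a metric isometry, the cross term is proportional to $g(J\mathcal{P}_{Z}JX, JZ) = g(\mathcal{P}_{Z}JX, Z)$. Here the hypothesis $\mathcal{P}_{D^{\perp}}D \in D$ enters decisively: since $JX\in D$, it forces $\mathcal{P}_{Z}JX\in D$, which is orthogonal to $Z\in D^{\perp}$, so the cross term vanishes and
$$\|\omega_{JD^{\perp}}(JX,Z)\|^{2} = \|\mathcal{P}_{Z}JX\|^{2} + (X(log f))^{2}\|Z\|^{2}.$$

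The final step is to sum this identity over the chosen bases. Because $D$ is $J$-invariant, $\{JX_{1},\dots,JX_{p}\}$ is again an orthonormal basis of $D$, so letting $X$ run over the basis and relabelling $JX_{a}\mapsto X_{a}$ leaves every total sum unchanged; in particular $\sum_{a,\alpha}\|\mathcal{P}_{Z_{\alpha}}JX_{a}\|^{2}$ recovers $\|\mathcal{P}_{D^{\perp}}D\|^{2}$, while for each fixed $\alpha$ one has $\sum_{a}(X_{a}(log f))^{2} = \|grad_{D}(log f)\|^{2}$, the squared norm of the $D$-projection of $grad(log f)$. Summing also over $\alpha=1,\dots,q$ (each $\|Z_{\alpha}\|=1$) yields
$$\sum_{a,\alpha}\|\omega_{JD^{\perp}}(JX_{a},Z_{\alpha})\|^{2} = \|\mathcal{P}_{D^{\perp}}D\|^{2} + q\,\|grad_{D}(log f)\|^{2}.$$
Combining this with the reductions of the first two paragraphs gives the claimed inequality, since $\|\omega\|^{2}\geq 2\sum_{a,\alpha}\|\omega_{JD^{\perp}}(X_{a},Z_{\alpha})\|^{2}$ and the retained factor of $2$ only strengthens the bound. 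The principal obstacle I anticipate is bookkeeping rather than conceptual: one must verify that the cross term is genuinely annihilated — this is precisely where $\mathcal{P}_{D^{\perp}}D\in D$ is indispensable — and that the $J$-reindexing is applied consistently so that the $\mathcal{P}$-term and the gradient term separate cleanly without any double counting or residual contribution from the $\nu$-component.
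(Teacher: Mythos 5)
Your proposal is correct and follows essentially the same route as the paper: decompose $\|\omega\|^{2}$ over orthonormal frames of $D$ and $D^{\perp}$, discard the pure $D$--$D$ and $D^{\perp}$--$D^{\perp}$ blocks, evaluate the mixed block via the identity $\omega_{JD^{\perp}}(JX,Z)=J\mathcal{P}_{Z}JX+X(\log f)JZ$ from the lemma, and use $\mathcal{P}_{D^{\perp}}D\in D$ to annihilate the cross term before summing. If anything you are slightly more careful than the paper, since you explicitly discard the $\nu$-component of $\omega(X_{a},Z_{\alpha})$ (which the paper's substitution of the lemma glosses over) and you keep the symmetry factor $2$ on the mixed sum, both of which only strengthen the bound.
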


\begin{proof}
Let $\{X_{1}, ...,  X_{p}, X_{p+1} = JX_{1}, ..., X_{2p}= JX_{p}\}$ be a local orthonormal frame of vector fields on $N_{T}$ and $\{Z_{1}, ..., Z_{q}\}$ be a local orthonormal frame of vector fields on $N_{\perp}$, where $2p+q = n$. Then we have
\begin{eqnarray*}\label{w25}
\nonumber \|\omega\|^{2} = \sum_{i,j=1}^{2p}g(\omega(X_{i},X_{j}), \omega (X_{i},X_{j})) + \sum_{\alpha=1}^{q}\sum_{j=1}^{2p}g(\omega(X_{i},Z_{\alpha}),\omega(X_{i},Z_{\alpha}))\\  +\sum_{\alpha,\beta =1}^{q}g(\omega(Z_{\alpha},Z_{\beta}), \omega(Z_{\alpha},Z_{\beta}))
\end{eqnarray*}
from above equation we can say that
\begin{eqnarray*}\label{w26}
 \|\omega\|^{2} \geq  \sum_{j=1}^{2p}\sum_{\alpha=1}^{q}g(\omega(X_{i},Z_{\alpha}),\omega(X_{i},Z_{\alpha}))
\end{eqnarray*}
Now from (\ref{w16}), we have
\begin{eqnarray*}\label{w27}
 \|\omega\|^{2} \geq \sum_{j=1}^{2p}\sum_{\alpha=1}^{q}g(J\mathcal{P}_{Z_{\alpha}}X_{i} - JX_{i}(log f)JZ_{\alpha}, J\mathcal{P}_{Z_{\alpha}}JX_{i}- JX_{i}(log f)JZ_{\alpha})
\end{eqnarray*}
In view of the assumption $\mathcal{P}_{D^{\perp}}D \in D$, we have
\begin{eqnarray*}\label{w28}
\nonumber\|\omega\|^{2} &\geq& \sum_{j=1}^{2p}\sum_{\alpha=1}^{q}\bigg[g(J\mathcal{P}_{Z_{\alpha}}X_{i} , J\mathcal{P}_{Z_{\alpha}}X_{i}) + g(JX_{i}(log f)JZ_{\alpha}, JX_{i}(log f)JZ_{\alpha})\bigg]
\nonumber \\ &=& \sum_{j=1}^{2p}\sum_{\alpha=1}^{q}\bigg[g(\mathcal{P}_{Z_{\alpha}}X_{i}, \mathcal{P}_{Z_{\alpha}}X_{i}) + (JX_{i}(log f))^{2}g(Z_{\alpha},Z_{\alpha})\bigg]
\nonumber \\ &=&  \|\mathcal{P}_{D^{\perp}}D\|^{2} + \sum_{j=1}^{2p}\|JX_{i}(log f)\|^{2}q
 \\ &=&  \|\mathcal{P}_{D^{\perp}}D\|^{2} + q\|grad_{D}(log f)\|^{2}
\end{eqnarray*}
where $grad_{D}$ denotes the gradient of some function on the distribution $D$.

Thus we have
\begin{eqnarray*}\label{w29}
\|\omega\|^{2} \geq  \|\mathcal{P}_{D^{\perp}}D\|^{2} + q\|grad_{D}(log f)\|^{2}.
\end{eqnarray*}
\end{proof}

\begin{theorem}
Let $\mathcal{W} = \mathcal{W}_{T}\times_{f}\mathcal{W}_{\perp}$ be a compact orientable  warped product CR-submanifold of Bochner Kaehler manifold $\overline{\mathcal{W}}$. If $\mathcal{P}_{D^{\perp}}D \in D$ and $B_{\nabla_{JX_{i}}^{\perp}JZ}X_{i} = B_{\nabla_{X_{i}}^{\perp}JZ}JX_{i}$, then we have $\rho \leq 0$, and the equality holds iff $grad_{D}(log f) = 0.$
\end{theorem}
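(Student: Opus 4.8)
The plan is to read off the scalar curvature from the adapted orthonormal frame $\{X_{1},\dots,X_{2p},Z_{1},\dots,Z_{q}\}$ constructed above, isolate the contribution of the \emph{mixed} plane sections $\mathrm{span}\{X_{i},Z_{\alpha}\}$ with $X_{i}\in D$ and $Z_{\alpha}\in D^{\perp}$, and tie those mixed sectional curvatures to the warping function, so that integrating over the compact manifold converts the whole expression into a single sign-definite term. First I would start from $\rho=\sum_{i<j}K(e_{i}\wedge e_{j})$ and split the sum into planes inside $D$, planes inside $D^{\perp}$, and mixed planes. Using the warped–product connection identity (\ref{w8}), $\nabla_{X}Z=X(\log f)Z$, a direct computation of $R(X_{i},Z_{\alpha})Z_{\alpha}$ expresses each mixed sectional curvature through the Hessian of $\log f$ on $\mathcal{W}_{T}$; summing over $i$ and $\alpha$ produces a term of the shape $-q\bigl(\Delta(\log f)+\|grad_{D}(\log f)\|^{2}\bigr)$, after invoking the elementary relation $\Delta f/f=\Delta(\log f)+\|grad_{D}(\log f)\|^{2}$.

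Next I would bring in the Gauss equation together with the Bochner Kaehler curvature tensor (\ref{a5}) to rewrite the same mixed curvatures extrinsically, so that $\overline R$ is replaced by its $L,M$–expression and the remaining terms are the second–fundamental–form quantities $g(\omega(X_{i},X_{i}),\omega(Z_{\alpha},Z_{\alpha}))-\|\omega(X_{i},Z_{\alpha})\|^{2}$. Here the two hypotheses carry the argument. The assumption $\mathcal{P}_{D^{\perp}}D\in D$ lets me substitute the lemma's identity (\ref{w16}) for $\omega_{JD^{\perp}}(JX,Z)$, exactly as in the proof of the preceding theorem, so that the $\omega$–terms reorganize around $grad_{D}(\log f)$; and the symmetry condition $B_{\nabla^{\perp}_{JX_{i}}JZ}X_{i}=B_{\nabla^{\perp}_{X_{i}}JZ}JX_{i}$ forces the cross terms obtained by pairing $X_{i}$ against $JX_{i}$ to cancel in pairs, so that what is left over is an exact divergence plus the manifestly non-positive contribution $-q\|grad_{D}(\log f)\|^{2}$.

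Finally, since $\mathcal{W}$ is compact and orientable, I would integrate the resulting identity over $\mathcal{W}$; the divergence (Laplacian) term disappears because $\int_{\mathcal{W}}\Delta(\log f)\,dV=0$ on a closed oriented manifold, leaving $\int_{\mathcal{W}}\rho\,dV=-q\int_{\mathcal{W}}\|grad_{D}(\log f)\|^{2}\,dV\le 0$, which is the asserted bound $\rho\le 0$; and since the integrand is continuous and non-negative, equality forces $grad_{D}(\log f)\equiv 0$, and conversely. The step I expect to be the genuine obstacle is the bookkeeping in the second paragraph: one must verify that, under the shape–operator symmetry hypothesis, every second–fundamental–form cross term assembles into a total divergence rather than surviving with an indefinite sign, so that after integration the \emph{only} remaining term is $-q\|grad_{D}(\log f)\|^{2}$. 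This is precisely the place where compactness and orientability are indispensable, and where the precise cancellation must be checked term by term rather than estimated.
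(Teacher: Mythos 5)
Your overall scaffolding (compute the mixed curvatures two ways, cancel the unwanted terms using the hypotheses, integrate over the compact manifold so the Laplacian term dies) resembles the paper's strategy, but the route fails at exactly the step you flagged as the obstacle, and it fails for a structural reason, not bookkeeping. The hypothesis $B_{\nabla^{\perp}_{JX_{i}}JZ}X_{i}=B_{\nabla^{\perp}_{X_{i}}JZ}JX_{i}$ involves the normal connection acting on $JZ$; such terms can only arise from \emph{derivatives} of the second fundamental form, i.e.\ from the Codazzi equation. That is what the paper actually does: it evaluates $\overline{R}(X,JX,Z,JZ)$ once through the Bochner expression, giving (\ref{w33}), and once through $\big[\overline{R}(X,JX)Z\big]^{\perp}$ paired with $JZ$ as in (\ref{w35}), which is precisely what produces the cross terms $g(\omega(X_{i},Z),\nabla^{\perp}_{JX_{i}}JZ)-g(\omega(JX_{i},Z),\nabla^{\perp}_{X_{i}}JZ)$ that the shape-operator hypothesis annihilates. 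In your Gauss-equation route, the mixed sectional curvature $K(X_{i}\wedge Z_{\alpha})$ contains only the algebraic quantities $g(\omega(X_{i},X_{i}),\omega(Z_{\alpha},Z_{\alpha}))-\|\omega(X_{i},Z_{\alpha})\|^{2}$ and no $\nabla^{\perp}$ at all, so the symmetry hypothesis has nothing to act on; those indefinite $\omega$-products survive, and neither hypothesis controls them (the lemma's identity (\ref{w16}) gives only the $JD^{\perp}$-component of $\omega(JX,Z)$, not $\omega(X,X)$ or $\omega(Z,Z)$). The cancellation you are counting on does not occur in this decomposition.

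Consequently your closing identity $\int\rho\,dv=-q\int\|grad_{D}(\log f)\|^{2}dv$ cannot be right: the ambient Bochner--Kaehler contributions entering through $L$ and $M$ (hence the Ricci and scalar curvature of $\overline{\mathcal{W}}$ via (\ref{a6}) and (\ref{a7})), together with the partial scalar curvatures $\rho_{D}$ and $\rho_{D^{\perp}}$, do not integrate away; in the paper's proof they survive integration and carry the entire final argument, which, after summing over a frame $Z_{\alpha}$ of $D^{\perp}$ and applying Green's theorem, compares $\int\rho\,dv$ against $\int\big[q\rho_{D}+2p\rho_{D^{\perp}}\big]dv$ as in (\ref{w67}). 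Separately, even if your identity held, $\int\rho\,dv\le 0$ does not yield the pointwise conclusion $\rho\le 0$ asserted in the statement; some comparison of $\rho$ with $\rho_{D}+\rho_{D^{\perp}}$, as in the paper, is needed rather than a single integrated sign. So two ideas are missing: (i) replace the Gauss equation by the Codazzi equation so that the normal-connection hypothesis can bite, and (ii) retain the ambient curvature and partial scalar curvature terms, which are the actual content of the inequality, instead of expecting them to assemble into a divergence.
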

\begin{proof}
Let $X \in D$, $Z \in D^{\perp}$, then from (\ref{a5}), we have
\begin{eqnarray}\label{w33}
\overline{R} (X, JX,Z,JZ) &=& -2M(X,JX)g(Z,Z) - 2M(Z,JZ)g(X,X)
\end{eqnarray}
 Now Codazzi equation is
\begin{eqnarray*}\label{w34}
\nonumber \bigg[\overline{R}(X,Y)Z\bigg]^{\perp} &=& \bigg \{ \nabla_{X}^{\perp}\omega(Y,Z) - \omega(\nabla_{X}Y,Z) - \omega(Y,\nabla_{X}Z)\bigg \}
\\ \nonumber & & - \bigg \{ \nabla_{Y}^{\perp}\omega(X,Z) - \omega(\nabla_{Y}X,Z) - \omega(X,\nabla_{Y}Z)\bigg \}
\end{eqnarray*}
In view of the last equation we may write
\begin{eqnarray}\label{w35}
\nonumber \overline{R}(X, JX, Z, JZ)  &=& g( \nabla_{X}^{\perp}\omega(JX,Z) - \omega(\nabla_{X}JX,Z) - \omega(JX,\nabla_{X}Z),JZ)
\\ & & - g(\nabla_{JX}^{\perp}\omega(X,Z) - \omega(\nabla_{JX}X,Z) - \omega(X,\nabla_{JX}Z),JZ)
\end{eqnarray}
We now compute each term of (\ref{w35}). First  we have
\begin{eqnarray}\label{w36}
Xg(\omega(JX,Z),JZ) = g(\overline{\nabla}_{X}\omega(JX,Z), JZ) + g(\omega(JX,Z),\overline{\nabla}_{X}JZ)
\end{eqnarray}
 Using Weingarten formula we have
\begin{eqnarray}\label{w37}
g(\overline{\nabla}_{X}^{\perp}\omega(JX,Z), JZ) =  Xg(\omega(JX,Z),JZ) -g(\omega(JX,Z),\overline{\nabla}_{X}JZ)
\end{eqnarray}
Now from (\ref{w19})
\begin{eqnarray}\label{w38}
\omega(JX, Z)  - J\omega_{\nu}(X,Z) = \mathcal{Q}_{Z}X + X(log f)JZ
\end{eqnarray}
Taking the inner product of (\ref{w38}) with $JZ$, we have
\begin{eqnarray}\label{w39}
g(\omega(JX, Z),JZ)  - g(J\omega_{\nu}(X,Z), JZ) = g(\mathcal{Q}_{Z}X, JZ)  + X(log f)g(JZ,JZ)
\end{eqnarray}
Combining (\ref{w22}) and (\ref{w39}), we get
\begin{eqnarray}\label{w41}
g(\omega(JX, Z),JZ) = g(\mathcal{\mathcal{P}}_{Z}JX, Z)  + X(log f)\|Z\|^{2}
\end{eqnarray}
Moreover
\begin{eqnarray*}
g(\omega(JX,Z), JZ) = X(log f)g(Z,Z)
\end{eqnarray*}
Hence we have
\begin{eqnarray}\label{w42}
\nonumber Xg(\omega(JX,Z), JZ) &=& X\bigg\{ X( log f) g(Z,Z) \bigg\}
\\ \nonumber &=& X\big( X(log f)\big) g(Z,Z)+ 2X(log f)g(Z,\nabla_{X}Z)
\\ \nonumber &=& X\big( X(log f)\big)\|Z\|^{2}+ 2\big( X(log f)\big)^{2}\|Z\|^{2}
\\ &=& \bigg \{  X (X(log f))+ 2( X(log f))^{2} \bigg \} \|Z\|^{2}
\end{eqnarray}
 From (\ref{w37}) and (\ref{w42}), we get
\begin{eqnarray}\label{w43}
g(\nabla_{X}^{\perp}\omega(JX,Z), JZ) = \bigg \{  X (X(log f))+ 2( X(log f))^{2} \bigg \} \|Z\|^{2} - g(\omega(JX,Z), \overline{\nabla}_{X}JZ)
\end{eqnarray}
Replacing $X$ by $JX$ in the above equation , we find
\begin{eqnarray}\label{w44}
-g(\nabla_{JX}^{\perp}\omega(X,Z), JZ) = \bigg \{  JX (JX(log f))+ 2( JX(log f))^{2} \bigg \} \|Z\|^{2} + g(\omega(X,Z), \overline{\nabla}_{JX}JZ)
\end{eqnarray}
Also using (\ref{w16}) and $\mathcal{P}_{D^{\perp}} D \in D$, we conclude that
\begin{eqnarray}\label{w45}
g(\omega_{JD^{\perp}}(JX, \nabla_{X}Z), JZ ) =  g(X(log f)J\nabla_{X}Z,JZ) = (X(log f))^{2}g(Z,Z) = (X(log f))^{2}\|Z\|^{2}
\end{eqnarray}
 Replacing $X$ by $JX$ in the above equation, we find
\begin{eqnarray}\label{w46}
g(\omega_{JD^{\perp}}(X, \nabla_{JX}Z), JZ )  = -(JX(log f))^{2}\|Z\|^{2}
\end{eqnarray}
Again using (\ref{w16}), we get
\begin{eqnarray*}\label{w47}
\omega_{JD^{\perp}}( \nabla_{JX}X), Z )  = J\mathcal{P}_{Z}\nabla_{JX}X- J\nabla_{JX}X(log f)JZ
\end{eqnarray*}
or
\begin{eqnarray*}\label{w48}
g(\omega_{JD^{\perp}}( \nabla_{JX}X), JZ )  = g(\mathcal{P}_{Z}\nabla_{JX}X, Z)- J\nabla_{JX}X(log f)\|Z\|^{2}
\end{eqnarray*}
The  above equation can be written as
\begin{eqnarray*}\label{w49}
g(\omega( \nabla_{JX}X), JZ )  = g(\mathcal{P}_{Z}\nabla_{JX}X, Z)- J\nabla_{JX}X(log f))\|Z\|^{2}
\end{eqnarray*}
 But $N_{T}$ is totally geodesic in $\overline{N}$ which implies that $\nabla_{JX}X \in D$. Hence $\mathcal{P}_{Z}J\nabla_{JX}X \in D$. This makes the first term in the above equation zero and hence we have
\begin{eqnarray}\label{w49}
g(\omega( \nabla_{JX}X), JZ )  = - J\nabla_{JX}X(log f))\|Z\|^{2}
\end{eqnarray}
Similarly on replacing $X$ by $JX$ in the above equation, we have
\begin{eqnarray*}\label{w50}
g(\omega( \nabla_{X}JX), JZ )  = - J\nabla_{X}JX(log f))\|Z\|^{2}
\end{eqnarray*}
 Using Gauss equation, the last equation simplifies to
\begin{eqnarray}\label{w51}
g(\omega( \nabla_{X}JX), JZ )  =  \nabla_{X}X(log f)g(Z,Z) + \nabla_{JX}JX(log f)g(Z,Z) - J\nabla_{JX}X(log f)g(Z,Z)
\end{eqnarray}
 Putting (\ref{w43})$\sim$(\ref{w51}) into (\ref{w35}), we get
\begin{eqnarray}\label{w52}
\nonumber \overline{R}(X,JX,Z,JZ )  &=&  \bigg \{ X(X(log f)) + 2(X(log f))^{2} \bigg \}\|Z\|^{2} - g(\omega(JX,Z), \nabla_{X}^{\perp}JZ)
\nonumber \\ & & - \nabla_{X}X(log f)\|Z\|^{2} - \nabla_{JX}JX(log f)\|Z\|^{2} + J\nabla^{JX}X(log f)\|Z\|^{2}
\nonumber \\ & & - (X(log f))^{2}\|Z\|^{2} + \bigg \{ JX(JX(log f)) + 2(JX(log f))^{2} \bigg \}\|Z\|^{2}
\nonumber \\ & & + g(\omega(X,Z), \nabla_{JX}^{\perp}JZ) - J\nabla_{JX}X(log f)\|Z\|^{2}
\end{eqnarray}
From(\ref{w33}) and (\ref{w52})
\begin{eqnarray*}\label{w53}
\noindent& &\nonumber - 2M(X,JX)g(Z,Z) - 2M(Z,JZ)g(X,X) \hspace{4cm}
\nonumber \\ \text{ } = & &\bigg \{ X(X(log f)) + 2(X(log f))^{2} \bigg \}\|Z\|^{2}
\nonumber \\ & & - g(\omega(JX,Z), \nabla_{X}^{\perp}JZ) - \nabla_{X}X(log f)\|Z\|^{2} - \nabla_{JX}JX(log f)\|Z\|^{2}
\nonumber \\ & & - (X(log f))^{2}\|Z\|^{2} + \bigg \{ JX(JX(log f)) + 2(JX(log f))^{2} \bigg \}\|Z\|^{2}
\nonumber \\ & & + g(\omega(X,Z), \nabla_{JX}^{\perp}JZ) - (JX(log f))^{2}\|Z\|^{2}
\end{eqnarray*}
Putting $X = X_{i}$ and taking summation from 1 to $p$, we drive
\begin{eqnarray*}\label{w54}
\noindent & &\nonumber -2\|Z\|^{2}\sum_{i=1}{p}M (X_{i}, JX_{i}) - 2M(Z, JZ)p
\nonumber \\ \text{ } = & & \sum_{i=1}^{p}\bigg \{ X_{i}(X_{i}(log f)) + JX_{i}(JX_{i}(log f)) - \nabla_{X_{i}}X_{i}(log f)) - \nabla_{JX_{i}}JX_{i}(log f)\bigg \}\|Z\|^{2}
\nonumber \\ & & + \sum_{i=1}^{p}\bigg \{ (X_{i}(log f))^{2} + (JX_{i}(log f))^{2}\bigg \}\|Z\|^{2}
\nonumber \\ & & + \sum_{i=1}{p}\bigg[ g(\omega(X_{i},Z), \nabla_{JX_{i}}^{\perp}JZ) - g(\omega(JX_{i},Z), \nabla_{X_{i}}^{\perp}JZ)\bigg]\|Z\|^{2}
\end{eqnarray*}
 from which we have
\begin{eqnarray*}\label{w55}
\noindent \nonumber & &-2\|Z\|^{2}\sum_{i=1}{p} M (X_{i}, JX_{i}) - 2M(Z, JZ)p
\nonumber \\ &  =&  \Delta_{D}(log f)\|Z\|^{2} + \|grad_{D}(log f)\|^{2}\|Z\|^{2}
\nonumber \\  & &+ \sum_{i=1}{p}\bigg[ g(\omega(X_{i},Z), \nabla_{JX_{i}}^{\perp}JZ) - g(\omega(JX_{i},Z), \nabla_{X_{i}}^{\perp}JZ)\bigg]\|Z\|^{2}
\end{eqnarray*}
Using (\ref{a6}) and (\ref{a7}) in the last equation we arrive at
\begin{eqnarray*}\label{w56}
\noindent \nonumber \frac{-1}{n+2}\sum_{i=1}^{p}\bigg[\|Z\|^{2}Ric(X_{i}, X_{i}) + \|X_{i}\|^{2} Ric(Z,Z)\bigg] + \frac{\rho \|X_{i}\|^{2}\|Z\|^{2}}{2(n+1)(n+2)}
\nonumber \\  = \Delta_{D}(log f)\|Z\|^{2} + \|grad_{D}(log f)\|^{2}\|Z\|^{2}
\nonumber \\+  \sum_{i=1}^{p}\bigg[ g(\omega(X_{i},Z), \nabla_{JX_{i}}^{\perp}JZ) - g(\omega(JX_{i},Z), \nabla_{X_{i}}^{\perp}JZ)\bigg]\|Z\|^{2}
\end{eqnarray*}
from which we have
\begin{eqnarray*}\label{w58}
\noindent \nonumber \frac{-1}{n+2}\bigg[\|Z\|^{2}\sum_{i=1}^{p}Ric(X_{i}, X_{i}) + p Ric(Z,Z)\bigg] + \frac{\rho p\|Z\|^{2}}{2(n+1)(n+2)}
\nonumber \\  = \Delta_{D}(log f)\|Z\|^{2} + \|grad_{D}(log f)\|^{2}\|Z\|^{2}
\nonumber \\+  \sum_{i=1}^{p}\bigg[ g(B_{\nabla_{JX_{i}}^{\perp} JZ}X_{i}, Z) - g(B_{\nabla_{X_{i}}^{\perp} JZ}JX_{i}, Z)\bigg]\|Z\|^{2}
\end{eqnarray*}
Since by assumption, we have $B_{\nabla_{JX_{i}}^{\perp} JZ}X_{i} = B_{\nabla_{X_{i}}^{\perp} JZ}JX_{i} $, then (\ref{w58}) becomes
\begin{eqnarray*}\label{w59}
\noindent \nonumber \frac{-1}{n+2}\bigg[\sum_{i=1}^{p}Ric(X_{i}, X_{i}) +\frac{p}{\|Z\|^{2}}  Ric(Z,Z)\bigg] + \frac{p \rho \|Z\|^{2}}{2(n+1)(n+2)}
\nonumber \\  = \Delta_{D}(log f) + \|grad_{D}(log f)\|^{2}
\end{eqnarray*}
Integrating both sides and using Green's equation, the last equation simplifies to
\begin{eqnarray}\label{w60}
\nonumber\frac{-1}{n+2}\int\bigg[\sum_{i=1}^{p}Ric(X_{i}, X_{i}) +\frac{p}{\|Z\|^{2}}  Ric(Z,Z)\bigg]dv + \int\frac{p \rho \|Z\|^{2}}{2(n+1)(n+2)}dv
\\ = \int \|grad_{D}(log f)\|^{2}dv
\end{eqnarray}
Similarly we have
\begin{eqnarray}\label{w61}
\frac{-1}{n+2}\int\bigg[\sum_{i=1}^{p}Ric(JX_{i}, JX_{i}) +\frac{p}{\|Z\|^{2}}  Ric(Z,Z)\bigg]dv + \int\frac{p \rho \|Z\|^{2}}{2(n+1)(n+2)}dv \nonumber \\ = \int \|grad_{D}(log f)\|^{2}dv
\end{eqnarray}
Adding  (\ref{w60}) and (\ref{w61}), we find
\begin{eqnarray*}\label{w62}
\frac{-1}{n+2}\int\bigg[\sum_{i=1}^{p}Ric(X_{i}, X_{i})+ \sum_{i=1}^{p}Ric(JX_{i}, JX_{i}) +\frac{2p}{\|Z\|^{2}}  Ric(Z,Z)\bigg]dv  \nonumber \\ + \int\frac{2p \rho \|Z\|^{2}}{2(n+1)(n+2)} dv = 2\int \|grad_{D}(log f)\|^{2}dv
\end{eqnarray*}
 from which we have
\begin{eqnarray*}\label{w63}
\frac{-1}{n+2}\int\bigg[\rho_{D} +\frac{2p}{\|Z\|^{2}}  Ric(Z,Z)\bigg]dv + \int\frac{p \rho \|Z\|^{2}}{(n+1)(n+2)} dv = 2\int \|grad_{D}(log f)\|^{2} dv
\end{eqnarray*}
 where $\rho _{D}$ is the scalar curvature of distribution $D$.
  Further replacing $Z$ by $Z_{\alpha}$ and taking summation from $1$ to $q$ on both sides.
  As
\begin{eqnarray*}\label{w64}
q\int \|grad_{D}(log f)\|^{2}dv \geq 0
\end{eqnarray*}
we conclude  that
\begin{eqnarray*}\label{w65}
\frac{-1}{n+2}\int\bigg[q\rho_{D} +2p\rho_{D^{\perp}}\bigg]dv + \int\frac{pq^{2} \rho}{(n+1)(n+2)} dv\geq 0
\end{eqnarray*}
This shows  that
\begin{eqnarray*}\label{w66}
\frac{pq^{2}}{(n+1)(n+2)}\int \rho dv\geq \frac{1}{n+2}\int\bigg[q\rho_{D} +2p\rho_{D^{\perp}}\bigg]dv
\end{eqnarray*}
or
\begin{eqnarray}\label{w67}
\int \rho dv \geq (n+1)\int\bigg[\frac{\rho_{D}}{pq} +\frac{2(n+1)}{q^{2}}\rho_{D^{\perp}}\bigg]dv
\end{eqnarray}
Thus we have
\begin{eqnarray*}\label{w68}
\int\bigg[\rho_{D} + \rho_{D^{\perp}}\bigg]dv \geq \int \bigg[\frac{(n+1)}{pq}\rho_{D} +\frac{2(n+1)}{q^{2}} \rho_{D^{\perp}}\bigg]dv
\end{eqnarray*}
From we have the following observations.
Either $(n+1) \leq pq$ and  $2(n+1) \leq q^{2}$ or $\rho_{D} \leq 0$ and $\rho_{D^{\perp}} \leq 0$ that id $\rho = \rho_{D} + \rho_{D^{\perp}} \leq 0$. Equality holds if and only if either $(n+1) = pq$ and $2(n+1) = q^{2}$ or $grad_{D}(log f) = 0.$
\end{proof}
\hspace{6cm}{\bf{References}}

\end{document}